\newtheorem{theorem}{Theorem}
\newtheorem{lemma}{Lemma}
\newtheorem{proposition}{Proposition}
\newtheorem{example}{Example}
\theoremstyle{definition}
\newtheorem{definition}{Definition}
\theoremstyle{remark}
\newtheorem{remark}{Remark}
\newcommand{\te}{Teich\-m\"ul\-ler}
\newcommand{\C}{\overline{\mathbb C}}
\newcommand{\cp}{\mathbb C}
\newcommand{\Ecupp}{E \cup \{p\}}
\newcommand{\doubleint}{\int\!\!\!  \int}
\newcommand{\w}{w_{t,\epsilon}}
\begin{document}

\title[Guiding isotopies]{Guiding Isotopies and  holomorphic motions}

\author{F. P. Gardiner and Yunping Jiang}
\date{}

\renewcommand{\thefootnote}{}
\footnote{2000 {\it Mathematics Subject Classification}.
Primary 30F60; Secondary 32G15, 30C70, 30C75.}

\renewcommand{\thefootnote}{\arabic{footnote}}

\vspace{.1in}
\begin{abstract}

We develop an isotopy principle for holomorphic motions.
Our main result concerns the extendability of a holomorphic motion
of a finite subset $E$ of a  Riemann surface $Y$ parameterized by a point $t$ in  a pointed hyperbolic surface $(X,t_0).$  If a holomorphic motion from $E$ to  $E_t$ in $Y$ has a guiding
quasiconformal isotopy, then there is a holomorphic extension to any new point $p$ in $Y-E$ that follows the guiding isotopy.
The proof gives a canonical way to replace a quasiconformal motion of the $(n+1)-st$ point by a holomorphic motion while leaving unchanged the given holomorphic motion of the first $n$ points.  In particular, our main result gives a new proof of
Slodkowski's theorem which concerns the special case when the parameter space is the open unit disk with base point $0$
and the dynamical space $Y$ is the Riemann sphere. 
\end{abstract}

\thanks{The second author is partially supported by the collaboration grant from the Simons Foundation (No.199837),
the CUNY collaborative incentive research grant (No.1861), awards from PSC-CUNY, and a grant from the NSF of China (No.11171121),
and a collaboration grant from Academy of Mathematics and Systems Science and the Morningside Center of Mathematics at the Chinese Academy of Sciences.}

\maketitle

\begin{section}{Introduction}~\label{sec1}


Suppose we are given a finite set $\{p_1(t),\ldots,p_n(t)\}$ of holomorphic maps
from a hyperbolic pointed Riemann surface $(X,t_0)$ with values in a Riemann surface $Y$
such that the set $E_t =\{p_1(t),\ldots,p_n(t)\}$ consists of $n$
distinct points in $Y$ for every $t \in X$. Here $X$ is called the parameter space
and $Y$ the dynamical space.  Suppose in addition we are given a continuous function
$X \ni t \mapsto p(t) \in Y$ such that for every value of $t \in X,$ $E_t \cup \{p(t)\}$ consists of $n+1$ distinct points in $Y.$ If all of these hypotheses are satisfied,
then $E_t, t \in X,$ is a holomorphic motion in $Y$ with parameter space $X$
which is extended continuously by the motion  $E_t \cup \{p(t)\}$ in $Y$ with the same parameter space $X$.
The main goal of this paper is to give an additional condition on $p(t)$ that provides
a canonical way to replace the continuous function $p(t): X\rightarrow Y$
with a holomorphic function $\widehat{p} (t): X\rightarrow Y$ such that $\widehat{p}(t) \not= p_{i}(t)$
for all $t\in X$ and $1\leq i \leq n$ and such that $p(t_{0})=\widehat{p}(t_{0}).$ This will give us a holomorphic motion $E_{t}\cup \{\widehat{p}(t)\}$
defined on the same parameter space $X$.

Our main result says that this is possible
as long as we are given a guiding isotopy  for the continuous motion $E_t \cup \{p(t)\}.$
In particular, we give a new proof of Slodkowski's theorem which concerns the case when the pointed parameter space $(X,t_0)$ is the open unit disk $\Delta$ with base point $t_0=0$
and the dynamical Riemann surface $Y$ is the Riemann sphere $\C$ minus three points. The proof has
a  topological part and a geometric part.
The topological part says that any holomorphic motion of a finite subset $E$ of the Riemann sphere with parameter space $(\Delta, 0)$
is always guided  by a guiding quasiconformal isotopy  with the same  parameter space.
The geometric part assumes there is given a guiding isotopy parameterized by the Riemann surface $(X,t_0)$ for a motion in the Riemann surface $Y.$  Assuming this it provides  a canonical holomorphic replacement.  It relies on the existence of a canonical cylindrical differential with a double pole at any point, on the heights mapping for quadratic differentials and on the use of harmonic Beltrami differentials to produce holomorphic coordinates for \te\ space.

The paper is organized as follows. In section 2, we review the definition of a continuous motion and define a guiding quasiconformal isotopy.
In section 3, we state our main results, Theorems~\ref{main} and \ref{main0}, and Slodkowski's theorem. In section 4, we  present the theory of holomorphic quadratic differentials, the heights mapping theorem and a limiting process which
yields a cylindrical quadratic differential with a single semi-infinite cylinder.
In section 5, we define our extension that follows a given guiding isotopy. In section 6, we prove that this extension is holomorphic.
The main ideas in sections 5 and 6  concern extremal infinite cyclinders corresponding to quadratic differentials with double poles and on the use of harmonic Beltrami differentials as coordinates for \te\ space.
In section 7, we give our new proof of Slodkowski's theorem. In section 8 we use Fuchsian groups and their deformations to quasi-Fuchsian groups to generalize the theorem from the case where $Y=\C-\{0,1,\infty\}$ to the case that $Y$ is an arbitrary Riemann surface. To make the paper contain the full proof of Slodkowski's theorem, we also show how to promote holomorphic motions of finite subsets of $Y$ to
holomorphic motions of all of $Y.$  In section 9 we discuss topological obstructions that  show that motions with non simply-connected parameter spaces do not necessarily have guiding isotopies. Thus the guiding isotopy assumption  is necessary.
\end{section}

\vspace*{10pt}
\noindent {\bf Acknowledgement:} We would like to thank Jeremy Kahn, Linda Keen, Sudeb Mitra, Hiroshige Shiga, and Zhe Wang for helpful discussions.
\vspace*{10pt}


\begin{section}{Motions and Guiding Isotopies.}~\label{sec2}


In this section we define motions of a set $E$ in a fixed Riemann surface $Y$ and guiding isotopies of motions.  We assume the motion is parameterized by points $t$ in a pointed Riemann surface $(X,t_0).$

\begin{definition}\label{defn1} Let $E \subset Y.$  A continuous motion $h$ of $E$ in $Y$ parameterized by $X$ with base point $t_0$ is a continuous map $h(t,z): X\times E\to Y$ satisfying
\begin{itemize}
\item[1)] $h(t_{0}, p) =p$, for all $p\in E$ and
\item[2)] for any fixed $t\in X$, $E \ni p\mapsto     h(t,p) \in Y$ is injective.
\end{itemize}
\end{definition}

We think of the parameter $t$ as measuring time and $h_t(z)=h(t,z)$ as specifying the motion of the point $z$ in a dynamical space.

 \begin{definition}\label{defn2}
A continuous motion $h$ is called a holomorphic motion of $E$ if for each $p\in E,$ $X \ni t \to h(t,p) \in Y$ is holomorphic.
\end{definition}

\begin{definition}\label{defn3}
Suppose $h(t,z)$ and  $\hat{h}(t,z)$  are continuous (or holomorphic) motions of subsets $E$ and $\hat{E}$ in $Y.$
If $E \subset \hat{E}$ and $\hat{h}(t,z)=h(t,z)$ for all $z$ in $E,$ then $\hat{h}$ is called
an extension of $h$ to $\hat{E}.$
\end{definition}

\begin{definition}\label{defn4} A guiding isotopy for a continuous motion  $h$ of $E \subset Y$   is an extension of $h$ to a motion  $H:X \times Y \rightarrow Y$ of all of $Y.$ It is called a quasiconformal guiding isotopy if for each $t \in X,$ $Y \ni z \to H(t,z) \in  Y$ is a quasiconformal and if the  Beltrami coefficient $$\mu_t(z) = \frac{\overline{\partial} H(t,z)}{{\partial} H(t,z)}$$ depends continuously on $t$ in the $L_{\infty}$-norm topology.
\end{definition}

We have the following proposition about guiding isotopies.

\begin{proposition}
Suppose $h (t,z): X\times E\to Y$ is a holomorphic motion. If $h$ has a quasiconformal guiding isotopy $H(t,z): X\times Y\to Y$, then any quasiconformal isotopy $G (t,z): X\times Y\to Y$  is isotopic to $H$ on $Y-E.$
\end{proposition}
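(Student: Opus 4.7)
The plan is to reduce the claim, for each fixed $t\in X$, to showing that the quasiconformal self-homeomorphism $\Psi_t := G_t\circ H_t^{-1}$ of $Y$ is isotopic to $\mathrm{id}_Y$ through quasiconformal maps that fix $E_t:=h_t(E)$ pointwise. Given such an isotopy $\widetilde{\Psi}_s$ of $Y$, the family $F_s:=\widetilde{\Psi}_s\circ H_t$ will interpolate $H_t$ and $G_t$ through quasiconformal homeomorphisms that agree with $h_t$ on $E$, and restriction to $Y-E$ yields the desired isotopy of $H_t|_{Y-E}$ to $G_t|_{Y-E}$ in $Y-E_t$.

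I would first verify the preliminary properties of $\Psi_t$: since both $H_t$ and $G_t$ extend $h$, for any $q=h_t(p)\in E_t$ we have $H_t^{-1}(q)=p=G_t^{-1}(q)$ and hence $\Psi_t(q)=q$. Also $\Psi_{t_0}=\mathrm{id}_Y$, because both guiding isotopies equal the identity at the basepoint $t_0$.

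For the main construction I would choose a path $\gamma:[0,1]\to X$ from $t_0$ to $t$, which exists since the Riemann surface $X$ is connected and therefore path connected, and then form the continuous family $\Psi'_s:= G_{\gamma(s)}\circ H_{\gamma(s)}^{-1}$. Each $\Psi'_s$ fixes the wrong set $E_{\gamma(s)}$ rather than $E_t$, so I would conjugate by the transport $\Phi_s := H_{\gamma(s)}\circ H_t^{-1}$, which carries $E_t$ to $E_{\gamma(s)}$, and set
\[
\widetilde{\Psi}_s \; := \; \Phi_s^{-1}\circ \Psi'_s\circ \Phi_s.
\]
A direct computation then confirms that $\widetilde{\Psi}_s$ fixes every $q\in E_t$ pointwise (because $\Phi_s(q)\in E_{\gamma(s)}$ is fixed by $\Psi'_s$), that $\widetilde{\Psi}_0=\mathrm{id}_Y$ (using $\gamma(0)=t_0$ and $H_{t_0}=G_{t_0}=\mathrm{id}_Y$), and that $\widetilde{\Psi}_1=\Psi_t$.

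The subtle point, which I expect to be the main obstacle, is continuity of $s\mapsto \widetilde{\Psi}_s$ (and, if one wants an isotopy as maps of $X\times(Y-E)$, joint continuity in $(s,t)$). This follows from the $L^{\infty}$ continuity of the Beltrami coefficients required in Definition~\ref{defn4}: via the measurable Riemann mapping theorem with appropriate normalizations, it produces continuity of $H_t$, $G_t$, and their inverses in the topology of locally uniform convergence on $Y$, a property that is stable under composition and inversion. This regularity is what promotes the formal definition of $\widetilde{\Psi}_s$ to an honest quasiconformal isotopy between $H$ and $G$ on $Y-E$.
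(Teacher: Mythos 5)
Your proof is correct and takes essentially the same route as the paper: both arguments use a path $\gamma$ from $t_0$ to $t$ in $X$ together with the $L^{\infty}$ continuity of the Beltrami coefficients to deform $H_{\gamma(s)}$ into $G_{\gamma(s)}$ through quasiconformal maps compatible with $h$. The paper streamlines your construction by working with $F_t = G_t^{-1}\circ H_t$, which fixes the initial set $E$ pointwise for \emph{every} $t$, so the conjugation by $\Phi_s$ is unnecessary and the isotopy is simply $s\mapsto F_{\gamma(s)}$ with $F_{t_0}=\mathrm{id}$.
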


\begin{proof}
For any $t\in X$, let $H_{t}= H(t,\cdot)$ and $G_{t}=H(t,\cdot)$. Then both of them are quasiconformal homeomorphisms of $Y$. Let $F_{t}= G_{t}^{-1}\circ H_{t}$. Since both of them are extensions of $h$, we have that $F_{t}(z)=z$ for all $z\in E$. Since $F_{t}$ is a quasiconformal homeomorphism, we can define its Beltrami coefficient $\mu_{t} =(F_{t})_{\overline{z}}/(F_{t})_{z}$ on $Y$. It is continuous on $t\in X$.
Thus $\phi (t)=F_{t}$ is a continuous map from $X$ to the space of homeomorphisms of $Y$. But $\phi(t_{0})=F_{t_{0}} =Id$.
\end{proof}

\nocite{SullivanThurston, ManeSadSullivan}

This proposition shows that the isotopy class  of  the extension $H$ of $h$ relative to $Y-E$ is unique.  However, its isotopy class relative to $Y-E \cup \{p\}$ may not be unique.
Our recipe for constructing a holomorphic extension of $h$ to $E \cup \{p\}$ depends only the isotopy class of $H$ relative to $E \cup \{p\}.$

\end{section}

\begin{section}{Statement of the main results}~\label{sec3}
In the following  we let the parameter space be a pointed Riemann surface $(X,t_0)$ and the dynamical space be another hyperbolic Riemann surface $Y.$

\vspace{.1in}

\begin{theorem}[Extending motions of finite sets]~\label{main}
Suppose $$X \times Y \ni (t,z) \mapsto H(t,z) \in Y$$ is a guiding quasiconformal isotopy for a holomorphic motion $p_{j}(t)=h(t,p_j)$ of $n$ distinct points $$E_t=\{p_{1}(t), \cdots, p_{n}(t)\}$$
in $Y.$
Then  for any point $p \in Y$ not equal to any of the points of
$E=\{p_{1}(t_{0}), \ldots, p_{n}(t_{0})\},$ the guiding quasiconformal isotopy $H(t,z)$ determines
a holomorphic motion $\hat{h}(t,p)$ that extends the given holomorphic motion $h(t,z)$ of $E$
to the set $\Ecupp.$ The holomorphically moving point $\hat{h}(t,p)$ depends
only on the guiding isotopy class of $H$ relative to $\Ecupp.$
\end{theorem}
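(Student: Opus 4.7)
The plan is to reformulate the statement in Teichm\"uller-space terms and then construct the extension using a canonical cylindrical quadratic differential with a double pole. The holomorphic motion $h$ of $E$ induces a holomorphic map $\Phi:X\to T(Y-E)$ into the Teichm\"uller space of the $n$-punctured surface $Y-E$, sending $t$ to the marked complex structure that identifies $Y-E$ with $Y-E_t$. The forgetful projection $T(Y-\Ecupp)\to T(Y-E)$ is a holomorphic fiber bundle (the Bers fiber space), and the guiding quasiconformal isotopy $H$ supplies a continuous section of this bundle over $\Phi(X)$ by sending $t$ to the class of $H(t,p)\in Y-E_t$. The task is to replace this continuous section by a holomorphic one, yielding $\hat h(t,p)$ with $\hat h(t_0,p)=p$.

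First I would define the candidate $\hat h(t,p)$ pointwise in $t$ by an extremal problem on $R_t=Y-E_t$. Among holomorphic quadratic differentials on $R_t$ with a double pole at a distinguished point, integrable away from that point, and having a single semi-infinite cylinder as horizontal trajectory structure, the heights mapping theorem selects, after suitable normalization, a unique cylindrical differential $\varphi_t$. Using the point $H(t,p)\in R_t$ as a guide to which isotopy class of distinguished puncture $\varphi_t$ should be built around, and using the semi-infinite cylinder of $\varphi_t$ as a canonical conformal collar, I would designate $\hat h(t,p)$ by the prescription to be given in section 5. At $t=t_0$ this returns $p$, since $H(t_0,\cdot)$ is the identity.

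The main obstacle is to show that the resulting $t\mapsto\hat h(t,p)$ is actually holomorphic rather than merely continuous in $t$. For this step I would use harmonic Beltrami differentials as holomorphic coordinates on $T(Y-E)$: the map $\Phi$ is represented locally by a family of harmonic Beltrami differentials depending holomorphically on $t$. The cylindrical differential $\varphi_t$ is characterized by an extremal problem whose solution varies complex-analytically with the underlying complex structure, and the location of its double pole is then fixed implicitly by complex-analytic data obtained by pairing $\varphi_t$ with the harmonic Beltrami differentials representing the deformation of $\Phi$. Combining the holomorphic dependence of the harmonic representatives with the analytic dependence of the extremal differential should yield holomorphicity; this is the hard part, and I expect it to require the full machinery developed in section 6.

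Finally, the assertion that $\hat h(t,p)$ depends only on the isotopy class of $H$ relative to $\Ecupp$ follows from the naturality of the construction: two guides in the same isotopy class relative to $\Ecupp$ send $p$ through homotopic arcs in each $R_t$, and the cylindrical extremal problem depends only on this free homotopy class of the distinguished puncture in the target fiber. The preceding proposition already handles the ambiguity relative to $Y-E$; the additional rigidity contributed by the canonical cylindrical differential pins down $\hat h(t,p)$ within the Bers fiber above $\Phi(t)$.
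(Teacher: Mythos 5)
Your outline identifies the same three pillars the paper uses --- a canonical cylindrical quadratic differential with a double pole, the heights mapping theorem, and harmonic Beltrami coefficients as holomorphic coordinates --- but as written it defers exactly the steps that constitute the proof, and two of them cannot be filled in the way you suggest. First, the heights mapping theorem does not directly ``select'' a differential $\varphi_t$ with a double pole: that theorem produces an integrable (finite-norm) differential, while a differential with a double pole has infinite norm, so the pole location cannot be the output of a single application of the extremal problem. The actual construction is a limiting process: remove a disc $\Delta(p,\epsilon)$ about $p$, reflect $E$ across its boundary to form the doubled configuration $E\cup j(E)$, take the maximal-modulus annulus homotopic to $\partial\Delta(p,\epsilon)$ in the doubled surface, transport its heights to the target surface by a truncated guiding map $g^{t,\epsilon}$ whose Beltrami coefficient is set to zero inside $\Delta(p,\epsilon)$, and then let $\epsilon\to 0$; the moduli of the resulting annuli blow up and $\hat h(t,p)$ is defined as the puncture of the limiting punctured disc. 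Writing that you would ``designate $\hat h(t,p)$ by the prescription to be given in section 5'' is circular in a self-contained argument; that prescription is the content of the proof.

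Second, your argument runs directly on an arbitrary dynamical surface $Y$ via the Bers fibration, but the machinery you invoke (normalized solutions of the Beltrami equation, the reflection $j$, the Ahlfors--Weill harmonic coordinates) lives naturally on the sphere. The paper first proves the statement for $Y\subset\C$ and then realizes a general $Y$ by a Fuchsian group $\Gamma$, forms an HNN-type Kleinian extension $\Gamma_\epsilon$ that plays the role of the reflected configuration, carries out the $\epsilon\to 0$ construction equivariantly, and obtains $\hat p(t)$ as the fixed point of a parabolic element of the deformed group, which visibly depends holomorphically on $t$. Some such equivariant setup is needed; without it the continuous section of the fibration has no concrete holomorphic replacement. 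Finally, the holomorphicity mechanism is not a pairing of $\varphi_t$ against harmonic Beltrami differentials: the point is rather that for $t$ near $t_0$ the motion of $E_t$ is represented by harmonic Beltrami coefficients supported inside the quasicircle $C_{t_0,\epsilon}$, so every point of the exterior annular region --- in particular the limiting puncture --- moves holomorphically in $t$. These three steps need to be supplied explicitly for the proposal to become a proof.
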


The complete proof of this theorem is given at the end of section \ref{sec8}.

The next theorem can be obtained from this one by letting  $Y$  equal  $\cp_{0,1},$ that is,
the Riemann sphere with $\{0,1,\infty\}$ removed.  However, our approach will be the opposite.  First we prove the next theorem which treats the case when $Y$ is the complement in the Riemann sphere of a set of three or more points.
Then we realize the surfaces $Y, Y_{\epsilon}$ and $Y_{t,\epsilon}$ that come up in the proof
by Fuchsian and Kleinian groups that act on the sphere and, by this means, we obtain a proof of Theorem \ref{main} derived from the the proof of Theorem \ref{main0}.

Suppose $p_{j}(t)=h(t,p_j)$ is a continuous motion of $E=\{p_1,\ldots,p_n\} \subset \C.$ Since the points of $E$ move continuously and distinctly, there is a continuous path of M\"obius transformations $A_t$
such that $A_t(p_1(t)=0, A_t(p_2(t))=1$ and $A_t(p_3(t))= \infty.$ Then  the ordered $n$-tuple
$$A_t(E_t))=\left(0,1,\infty, A_t(p_4(t)),\ldots,A_t(p_n(t))\right)$$ is a continuous motion of $A_0(E).$  We shall say that the motion $A_t \circ h$ is normalized at $0,1$ and $\infty.$

Obviously, if $E_t$ depends holomorphically on $t$ then so does $A_t(E_t)$. Thus by showing
how to extend  a  holomorphic motion of a subset of $\C$ normalized at $0, 1, \infty$, and by choosing an appropriate holomorphic path of M\"obius transformations $A_t$ we also provide a method for extending holomorphic motions that are not normalized.

\vspace*{5pt}
\begin{theorem}[Motions in the Riemann sphere]~\label{main0}
Suppose $$E=\{p_{1}, \cdots, p_{n}\}$$ is a finite subset of $\C$ containing $n \geq 3$ distinct points and
$X$ is a hyperbolic Riemann surface with base point $t_{0}.$   Assume $h: X\times E\to \C$ is a holomorphic motion of $E$ in $\C$ and $p$
is a point in $\C - E.$
If there is a guiding quasiconformal motion $\widetilde{h}: X \times \C \to \C$ that extends $h,$ then
there is a holomorphic motion $\widehat{h}: X\times (E\cup \{p\}) \to \C$ that has the same initial
value at $t=t_0$ as $\widetilde{h}$ and that extends $h$. The holomorphically moving point $\widehat{h}(t,p)$ depends
only on the guiding isotopy class of $\widetilde{h}$ relative to $\Ecupp.$
\end{theorem}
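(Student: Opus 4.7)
The plan is to construct $\widehat{h}(t,p)$ as a canonical holomorphic replacement for $\widetilde{h}(t,p)$ by exploiting the structure of a distinguished meromorphic quadratic differential on the punctured Riemann surfaces $Y_{t}=\C\setminus E_{t}$. Since $|E_{t}|=n\geq 3$, each $Y_{t}$ is hyperbolic, and the guiding isotopy $\widetilde{h}$ supplies a continuous family of quasiconformal markings between them. First I would construct, for each $t$, the canonical cylindrical quadratic differential $\phi_{t}$ promised by section 4: a meromorphic quadratic differential on $\C$ whose only singularities are at most simple poles at the points of $E_{t}$ together with a single double pole at $p(t)=\widetilde{h}(t,p)$, obtained as the limit of the heights-mapping construction and carrying a prescribed semi-infinite cylinder at $p(t)$. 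This differential depends canonically on the pair $(Y_{t},p(t))$ and on the guiding isotopy class of $\widetilde{h}$ relative to $\Ecupp$.

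Next I would use $\phi_{t}$ and harmonic Beltrami differentials to produce the replacement. The pointed surface $(Y_{t},p(t))$, marked by $\widetilde{h}$, represents a point $\tau(t)$ of the Teichm\"uller space $T(\C\setminus\Ecupp)$. Harmonic Beltrami differentials dual to holomorphic quadratic differentials furnish holomorphic local coordinates at $\tau(t_{0})$, and the same persists at each nearby $\tau(t)$ using $\phi_{t}$. I would then define $\widehat{p}(t)$ as the image of $p$ under the normalized quasiconformal self-homeomorphism $w_{t}$ of $\C$ that solves the Beltrami equation for the harmonic representative of the Teichm\"uller class of $\mu_{t}=\overline{\partial}\widetilde{h}_{t}/\partial\widetilde{h}_{t}$, the normalization being chosen at $0,1,\infty$ so that $w_{t}$ reproduces the given holomorphic motion $h$ on $E$. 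By construction $\widehat{h}(t_{0},p)=p$, $\widehat{h}(t,p)\neq p_{j}(t)$ for every $j$, and $\widehat{h}$ extends $h$.

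The hard part will be verifying that $t\mapsto\widehat{h}(t,p)$ is holomorphic in $t$. My strategy is to show that in the harmonic Beltrami coordinates dual to $\phi_{t}$ the class $\tau(t)$ depends holomorphically on the holomorphic parameter $t\in X$: the points of $E_{t}$ move holomorphically by hypothesis, and the location of the distinguished double pole is determined canonically by the guiding isotopy class of $\widetilde{h}$ relative to $\Ecupp$. Combining this with the fact that the forgetful projection $T(\C\setminus\Ecupp)\to T(\C\setminus E)$ is holomorphic and admits a canonical section coming from $\phi_{t}$, one identifies the $(n+1)$-st moving point with a holomorphic function of $t$. Uniqueness of $\widehat{h}(t,p)$ as a function of the guiding isotopy class of $\widetilde{h}$ then follows from the proposition of section 2 together with the canonical nature of $\phi_{t}$, completing the proof.
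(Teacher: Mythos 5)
You have correctly named the paper's toolkit (the heights mapping theorem, cylindrical quadratic differentials with a double pole obtained by a limiting process, harmonic Beltrami coefficients as Teichm\"uller coordinates), but the mechanism you propose inverts the logic of the construction at the decisive point, and this is a genuine gap rather than a presentational one.

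First, you prescribe the double pole of your canonical differential $\phi_t$ to lie at $p(t)=\widetilde h(t,p)$ and say that $\phi_t$ ``depends canonically on the pair $(Y_t,p(t))$.'' If the location of the double pole is an \emph{input} to the construction, then the distinguished point you can extract from $\phi_t$ is just $p(t)$ itself, which is only continuous; nothing holomorphic has been produced and the argument is circular. In the paper, the differential $q_{t,\epsilon}$ on $Y_{t,\epsilon}$ is determined by transporting the \emph{heights data} of the symmetric maximal cylinder on $Y_\epsilon=\C-(E\cup j(E))$ through the isotopy class of $g^{t,\epsilon}$ (the guiding map with Beltrami coefficient set to zero inside $\Delta(p,\epsilon)$); the location of the puncture of the degenerating cylinder in the limit $\epsilon\to0$ is the \emph{output}, and this new point $\hat p(t)$ is in general different from $\widetilde h(t,p)$ for $t\ne t_0$. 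The reflection $j$ across $\partial\Delta(p,\epsilon)$ and the passage $\epsilon\to0$, which force the modulus of the characteristic cylinder to blow up so that the cylinders converge to a punctured disc centered at the new point, are the essential analytic steps; your sketch omits them.

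Second, your fallback definition of $\widehat p(t)$ as the image of $p$ under the normalized solution for ``the harmonic representative of the Teichm\"uller class of $\mu_t$'' does not work as stated. If the class is taken in $T(\C\setminus(E\cup\{p\}))$, it records the continuous motion of $p$ and does not vary holomorphically in $t$, so neither does the normalized solution. If the class is taken in $T(\C\setminus E)$, the representative depends only on the motion of $E$ and cannot detect the guiding isotopy class relative to $E\cup\{p\}$, contradicting the dependence statement you must prove; moreover the global harmonic (Douady--Earle) section is only continuous --- this is precisely how the paper manufactures the guiding isotopy in Lemma~\ref{qcmotions}, not how it produces the holomorphic replacement. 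The paper's actual holomorphy argument is local and different: for $t$ near $t_0$ the deformation from $Y_{t_0,\epsilon}$ to $Y_{t,\epsilon}$ is represented by holomorphically varying harmonic Beltrami coefficients supported \emph{inside} the quasicircle $C_{t_0,\epsilon}$, away from the degenerating cylinder, so every point in the exterior, in particular the limiting puncture $\hat p(t)$, varies holomorphically. Asserting that ``the location of the double pole is determined canonically by the guiding isotopy class'' assumes the conclusion: the guiding isotopy is only continuous, so canonicity alone yields no holomorphy.
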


If the parameter space $X$ is compact or compact except for a finite number of punctures, this theorem has no interest because the only holomorphic functions are constant.

The next sections up to and including section 7 are devoted to proving Theorem \ref{main0}. In section  8 we show how to use Theorem \ref{main0} to obtain
Theorem \ref{main}.  The proofs rely on properties of holomorphic
quadratic differentials with double poles, see \cite{Strebelbook}, and on holomorphic coordinates for \te\ space provided by the Bers' embedding, see \cite{Bers6}.

Letting $(X,t_0)$ be the unit disc $\Delta$ with base point $0,$ Theorem \ref{main} becomes the following theorem of Slodkowski.
\vspace*{5pt}
\begin{theorem}[Slodkowski~\cite{Slodkowski}]~\label{slodkowski}
Suppose $X=\Delta=\{ t\;|\; |t|<1\}$ is the open unit disk, with the base point $t_0=0$.
Then any holomorphic motion $h(t,z): \Delta\times E\to \overline{\mathbb{C}}$ can be extended
to a holomorphic motion $\widetilde{h}(t,z): \Delta \times \overline{\mathbb{C}} \to \overline{\mathbb{C}}$.
\end{theorem}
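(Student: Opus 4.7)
The plan is to deduce Slodkowski's theorem from Theorem \ref{main} (equivalently Theorem \ref{main0}), specializing to the case $X=\Delta$, $t_0=0$. Since after composition with a one-parameter family of M\"obius transformations one may assume $|E|\geq 3$ and that three marked points remain fixed at $0,1,\infty$, the core task is to extend such a normalized holomorphic motion of an arbitrary set $E\subset \overline{\mathbb{C}}$ to one of all of $\overline{\mathbb{C}}$. I would organize the argument in three stages: (i) a topological existence statement producing guiding quasiconformal isotopies for finite-set motions over $\Delta$; (ii) a countable inductive application of Theorem \ref{main} to add points one at a time; and (iii) a normal-family passage to the closure.

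For stage (i), I would invoke the topological part alluded to in the introduction: because $\Delta$ is simply connected and the ordered configuration space of $n$ distinct points in $\overline{\mathbb{C}}$ is a $K(\pi,1)$-fiber bundle over smaller configuration spaces, a holomorphic motion $h:\Delta\times E\to \overline{\mathbb{C}}$ of a finite set lifts (via the homotopy lifting property, combined with Ahlfors--Beurling type quasiconformal interpolation on each fiber) to a continuous family of orientation preserving homeomorphisms $H(t,\cdot):\overline{\mathbb{C}}\to\overline{\mathbb{C}}$ with $H(0,\cdot)=\mathrm{id}$. A standard smoothing (using e.g.\ barycentric extension or triangulation-based quasiconformal interpolation) can be arranged so that each $H(t,\cdot)$ is quasiconformal and so that $t\mapsto \mu_{H_t}$ varies continuously in $L_\infty$. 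This yields the quasiconformal guiding isotopy required to invoke Theorem \ref{main}.

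For stage (ii), fix a countable dense set $\{q_1,q_2,\ldots\}\subset \overline{\mathbb{C}}\setminus E$ (with $E$ finite) and set $E_0=E$, $E_k=E\cup\{q_1,\dots,q_k\}$. Inductively, given a holomorphic motion of $E_{k-1}$ with a chosen guiding quasiconformal isotopy, Theorem \ref{main} produces a new holomorphic motion of $E_{k-1}\cup\{q_k\}=E_k$ agreeing with the previous one on $E_{k-1}$ and agreeing at $t=0$ with the guiding isotopy evaluated at $q_k$. To carry the induction forward one needs a guiding isotopy at each stage, which one obtains either by restricting the original $H$ (isotopy class is unique relative to $E_k\cup\{q_{k+1}\}$ by the proposition in section 2, up to irrelevant ambiguity), or by reapplying stage (i) to the enlarged finite set. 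This produces a holomorphic motion defined on the countable dense set $E\cup\{q_1,q_2,\ldots\}$. For $E$ infinite, one applies the same countable induction starting from a countable dense subset of $E$ itself, combined with continuity on $E$.

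For stage (iii), I would use the standard fact that a holomorphic motion over $\Delta$ is automatically quasisymmetric (indeed H\"older) in the dynamical variable with modulus depending only on $|t|$ (the $\lambda$-lemma and its quantitative refinements). This means the family $\{h_t\}$ restricted to the dense set is uniformly continuous on compact subsets of $\Delta$, so it extends continuously in $z$ to all of $\overline{\mathbb{C}}$, and holomorphically in $t$ (by Weierstrass/Montel applied to each fixed $z$ after realizing $z$ as a limit of dense points). Injectivity in $z$ for each fixed $t$ is preserved under the limit by the $\lambda$-lemma's uniform quasisymmetry bound. The main obstacle in this program is stage (i): the production of a quasiconformal guiding isotopy requires careful topological bookkeeping and a quantitative quasiconformal interpolation that respects continuous dependence on $t\in\Delta$. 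Once stage (i) is in hand, stages (ii) and (iii) are essentially formal consequences of Theorem \ref{main} together with classical normal family arguments.
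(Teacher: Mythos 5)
Your proposal follows essentially the same route as the paper: first produce a quasiconformal guiding isotopy for each finite-set motion over $\Delta$ (the paper's Lemma~\ref{qcmotions} does this precisely by lifting $f:\Delta\to Y_{n-3}$ through the universal covering $\pi_E:T(E)\to Y_{n-3}$ and composing with the Douady--Earle continuous section, which is the rigorous version of your homotopy-lifting-plus-barycentric-smoothing step), then adjoin a countable dense set of new points one at a time via Theorem~\ref{main}, and finally pass to the closure by the $\lambda$-lemma. The argument is correct and matches the paper's proof in structure and in all essential tools.
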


\end{section}

\begin{section}{Cylinders with maximal modulus}~\label{sec4}

\begin{theorem}[Heights Mapping Theorem]~\label{maxmoduli} Assume $E$ is a finite subset of $\overline{\mathbb C}$ containing two or more points and let $\Delta (p)$ be a conformal disc centered at $p$ in  $\overline{\mathbb C} - E.$ Let $\gamma$ be a simple closed curve in $Y=\overline{\mathbb C}-(E\cup \Delta (p))$
and homotopic to the boundary of $\Delta(p).$ Then there exists a unique holomorphic quadratic differential $q$ of finite norm defined on $Y$ with the following properties:
\begin{enumerate}
\item{all of the regular horizontal trajectories     of $q$ are closed and homotopic to      $\gamma,$}
\item{the regular horizontal trajectories form a     cylinder $A$ that fills $\C-(E \cup     \Delta(p))$ except for a critical graph $C,$
}\item{each regular horizontal trajectory $\alpha$ in this cylinder has length equal to $ 2\pi$ and the totality of these trajectories fill $A,$}
\item{$C$ is a closed, connected set of measure     zero and is the union of critical horizontal     trajectories of $q$ that  join its zeros and     poles,}
\item{$C$ is a connected, simply connected finite     graph, the poles of $q$ form the endpoints of     $C$ and any zero of order $k$ is a vertex of     $C$ from which  $k+2$ edges of $C$ emanate,     and}
\item{$||q||=\int \int_{\overline{\mathbb C}-(E     \cup \Delta(p))}|q(z)|dxdy = 2 \pi b,$ where     $b$ is the height of $A$ measured in the     metric $|q|^{1/2}.$}
\end{enumerate}
$q$ is the unique holomorphic quadratic differential on $Y$ with the properties that it has a characteristic cylinder with maximal modulus among all cylinders homotopic to the boundary of $\Delta(p)$  and its circumference measured with respect to the metric $|q(z)|^{1/2}|dz|$  is equal to
$2\pi.$
\end{theorem}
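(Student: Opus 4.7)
The plan is to invoke the Jenkins--Strebel theory of quadratic differentials with closed horizontal trajectories, applied to $Y$ and the free homotopy class of $\gamma$. First I would note that $Y$ is a finite Riemann surface: filling in the punctures, $Y^{*}=Y\cup E$ is a closed topological disc with boundary $\partial\D(p)$ and $|E|\geq 2$ interior points. The curve $\gamma$ is homotopically non-trivial in $Y$, since on its inside it bounds a disc with at least two punctures. Consequently the extremal problem of maximizing the conformal modulus among embedded cylinders $A\subset Y$ with core isotopic to $\gamma$ has a finite strictly positive supremum.

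Applying the Jenkins--Strebel existence and uniqueness theorem (\cite{Strebelbook}, Chapter VI), this extremal problem is solved by a unique holomorphic quadratic differential $q_{0}$ of finite $L^{1}$ norm on $Y$ whose regular horizontal trajectories are all closed, freely homotopic to $\gamma$, and sweep out a single maximal characteristic cylinder $A_{0}$. The complement $C_{0}=Y-A_{0}$ is a closed set of measure zero consisting of finitely many critical horizontal trajectories connecting the zeros and poles of $q_{0}$. Scaling $q=\lambda q_{0}$ by the unique positive $\lambda$ that makes every closed trajectory have length $2\pi$ in the singular flat metric $|q|^{1/2}|dz|$ then establishes properties (1)--(4) and fixes the normalization asserted in the last sentence of the theorem.

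For (5) I would combine the local normal forms at the singularities with the global topology of $Y^{*}$. Any pole of $q$ must lie in $E$, and finiteness of the $L^{1}$ norm forces each such pole to be at most simple; the standard local model then shows a simple pole is the endpoint of exactly one horizontal arc, while a zero of order $k$ is a vertex at which $k+2$ horizontal arcs meet. Globally, $\bar A$ is a closed annulus in the disc $Y^{*}$ with one boundary circle $\partial\D(p)$. Because every regular leaf lies in the single cylinder $A$, any cycle in $C\cup E$ would enclose a nontrivial region of $Y^{*}$ disjoint from $\bar A$ and thus create a second cylinder, contradicting the fact that the trajectory foliation consists of a single cylinder. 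Hence $C\cup E$ is a connected, simply connected finite graph --- a tree --- with the points of $E$ as its leaves and zeros of $q$ as its internal vertices of the prescribed valence.

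Property (6) is immediate from the flat geometry of the cylinder: in the metric $|q|^{1/2}|dz|$, $A$ is isometric to a Euclidean cylinder of circumference $2\pi$ and height $b$, so $\|q\|=\mathrm{Area}(A)=2\pi b$, since $C$ has zero area. The concluding uniqueness statement is the uniqueness clause of the Jenkins--Strebel theorem: a characteristic cylinder of maximal modulus in the homotopy class of $\gamma$ is unique, determines the quadratic differential up to a positive multiplicative constant, and the circumference normalization pins down that constant. The main obstacle I anticipate is step (5): showing that $C\cup E$ is not merely connected but actually a tree requires one to carefully combine the local Whitehead pictures at each zero and simple pole with the global topology of the disc $Y^{*}$ and rule out hidden cycles.
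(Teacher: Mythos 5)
Your proposal is correct and follows essentially the same route as the paper: both reduce the theorem to the Jenkins--Strebel existence and uniqueness theory for the extremal embedded cylinder in the free homotopy class of $\gamma$, normalized so that the closed horizontal trajectories have length $2\pi$. The only difference is one of emphasis within that shared approach: the paper's sketch elaborates the variational step (that the extremal annulus metric $-\left(\frac{dg}{g}\right)^2$ extends to a holomorphic quadratic differential on all of $Y$, via the identification $Q(Y)^{\perp\perp}=Q(Y)$ and the Hamilton--Krushkal lemma) and appends a quasiconformal heights-distortion estimate used later in the paper, whereas you take existence and uniqueness as a black box and instead flesh out the tree structure of the critical graph in property (5), which the paper leaves entirely to the references.
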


\begin{proof} The existence and uniqueness of $q$ with these properties is well-known, see
\cite{Strebelbook,Jenkins1,Gardinerbook,Gardiner1c}.
For any simple closed curve $\alpha$ not homotopically trivial and not homotopic to a puncture on any hyperbolic Riemann surface, such a holomorphic quadratic differential is obtained by maximizing the modulus of a cylinder among all cylinders on the surface homotopic to $\alpha.$  A similar  conclusion is true even if we begin with a system $\{\alpha_j\}$ of non-homotopic simple closed curves, and either the heights of the annuli can be arbitrarily specified (the heights theorem of Renelt \cite{Renelt}) or the projective class of the moduli of the annuli can be arbitrarily specified (the Strebel moduli theorem \cite{Strebelbook}).

For completeness of exposition here we give a sketch of the proof in the case we need, namely, the case where
there is just one annulus with core curve homotopic to a boundary component and the size of this boundary component shrinking to zero.

\vspace*{5pt}
\begin{lemma}\label{Lvariation}Suppose $A$ is an annular Riemann surface conformal to the ring domain $\{z:r<|z|<R\}$ and $\mu(z)$ is an $L^{\infty}$ Beltrami differential on $A$ with $||\mu||_{\infty}<1.$ In terms of the conformal parameter $w $ let $q(w) (dw)^2= \left(\frac{dz}{z} \right)^2$ and let $A_{\mu}$ be the same
annulus with the conformal structure induced by $\mu.$  This means  a local homeomorphism $w=h(z)$ from a neighborhood of a point $p$ in $A$ is declared to be conformal if
$$ h_{\overline{z}}(z) = \mu(z) h_z(z).$$  Let $\Lambda(A)$ be the extremal length of the family of arcs in $A$ that join its two boundary components and
$\Lambda(A_{\mu})$ be the extremal length with the same family  with respect to the conformal structure induced by $\mu.$  Then

\begin{equation}\label{muvar}
\log \Lambda(A_{t \mu})= \log \Lambda(A) + 2 {\rm \ Re \ } \frac{t}{||q||} \int\!\!\!\int_A \mu q dx dy + O(|t|^2).
\end{equation}
\end{lemma}
\begin{proof} This formula follows from the Reich-Strebel inequalities and is proved in~\cite{Gardiner1b} and~\cite{Gardinerbook}.
\end{proof}

\vspace*{5pt}
\begin{lemma}\label{Lglobalqd} Suppose $A$ is an annulus embedded in a Riemann surface  $Y$ and the modulus of $A$ is as large as possible among all annuli homotopic to $A$ in $Y.$  Let $g$ be a conformal map from $A$ onto the region $\{z: r<|z|<R\}.$  Then
$-\left( \frac{dg}{g}\right)^2$ is the restriction of a holomorphic quadratic differential $q$ on $Y$ and
$$\int\!\!\!\! \int_Y |q| dx dy = \int\!\!\!\! \int_A \left|\frac{1}{ z ^2} \right| dx dy = 2 \pi \log(R/r).$$
All of the regular horizontal trajectories of $q$ are closed and are the images under $g$ of the circles $\rho e^{i \theta}, r<\rho<R.$
\end{lemma}
\begin{proof} This lemma is proved in \cite{Gardiner1c} and, for general measured foliations in \cite{GardinerLakicbook}.  For the benefit of the reader we repeat the main ideas of the proof here.

The complex Banach space $Q(Y)$ of integrable holomorphic quadratic differentials on $Y$ is a closed subspace of  $L^1(Y),$ the space
of integrable quadratic differentials and the dual Banach space $L^1(Y)^*$ is isometric to $L^{\infty},$ the space of essentially bounded Beltrami differentials under the pairing
$$ (q,\mu)= \int\!\!\!\! \int_Y \mu q dx dy.$$
In particular, if $\mu$ represents a linear functional $\ell \in L^1(Y)^*$ then
$$||\ell ||_{*} = \sup_{q \in L^1(Y)}\frac{\left( \int\!\! \int\mu q dx dy
 \right)}{||q||} = ||\mu||_{\infty}.$$
Our strategy to prove this lemma uses the identification of $Q(Y)^{\perp \perp}$ with $Q(Y)$ provided by this pairing.  Showing that $\left(\frac{dg}{g}\right)^2$ is in $Q(Y)^{\perp \perp}$
also shows that it is in $Q(Y).$ If $\left(\frac{dg}{g}\right)^2 \notin Q(Y)^{\perp \perp}$ then there is a Beltrami differential $\mu$ supported in $A$ such that
$$\int\!\!\!\! \int_A \mu q dx dy=0$$
for all $q \in Q(Y)$ and $\int\!\!  \int_A \mu \left(\frac{dg}{g}\right)^2  dx dy \neq 0.$  By the Hamilton-Krushkal variational lemma there would
be a curve of deformations $\mu_t$ tangent to $\mu$ at $t=0$ for which, by Lemma \ref{Lvariation}, $\Lambda(A_{\mu_t})$ is smaller than $\Lambda(A)$ and for which
$Y(\mu_t)$ represents the same point of \te\ space as $Y.$  This conclusion contradicts the assumption that $\Lambda(A)$ has maximum modulus among homotopic annuli $A$ embedded in $Y.$
\end{proof}

The following result is also well known \cite{Gardinerbook, GardinerMasur}.

\vspace*{5pt}
\begin{lemma}\label{annulidistortion} Let $f$ be a quasiconformal homeomorphism mapping a hyperbolic Riemann surface $Y$ to a Riemann surface $f(Y)$ and let $K$ be the maximal dilatation of $f.$  Let $q$ be a holomorphic quadratic differential on $Y$ of finite norm with given heights and $q_f$ a holomorphic quadratic differential on $f(Y)$ such the height along the isotopy class of any curve $\gamma$ in $Y$ measured with respect to $|\!\!{\rm \  Im \ } (q(z)^{1/2} dz)|$ is equal to the height of the isotopy class $f(\gamma)$ in $f(Y)$ measured with respect to $|\!\!{\rm \  Im \ } (q_f(w)^{1/2} dw)|.$  Then
\begin{equation}\label{Dirichlet}
 K^{-1} ||q|| \leq ||q_f|| \leq K  ||q||.\end{equation}
\end{lemma}

\begin{proof}
By the Dirichlet principle \cite{Gardiner4,Gardinerbook} for measured foliations
$$||q||=\int\!\!\!\! \int_Y |q| dx dy$$ is equal to the infimum of $2 D(v) = 2  \int \int_Y (v_x^2 + 2v_y^2) \  dx dy,$
where the infimum is taken over all measured foliations  $|dv|$ that realize the heights of $q,$ and any measured foliation that realizes this infimum is the absolute value of the imaginary part of the square root of $q.$
Note that $|d (v \circ f)|$ has the same corresponding heights on $Y$ that $|dv|$ has on $f(Y),$ and

$$(v \circ f)_z=(v_w \circ f)f_z + (v_{\overline{w}} \circ f)\overline{f}_z.$$

\noindent Since $v$ is real-valued and defined up to plus or minus sign and up to the addition of a constant, $|v_w|$ and $|v_{\overline{w}}|$ are invariant and
$|v_w|^2=
|v_{\overline{w}}|^2=(1/4)(v_x^2+v_y^2).$  Thus we can use exactly the same calculation that is given at the end of Chapter 1 in \cite{Ahlforsbook5}.  We have

$$|(v \circ f)_z| \leq (|v_w| \circ f)(|f_z| + |f_{\overline{z}}|), {\rm \ and \ }$$

$$D_Y (|dv \circ f|)
=2 \int\!\!\! \int_Y |(v \circ f)_z|^2 |dz \wedge \overline{dz}|
\leq $$  $$2 \int\!\!\!  \int_Y (|v_w|\circ f)^2(|f_z|+|f_{\overline{z}}|)^2 |dz \wedge d\overline{z}| =$$

$$2\int\!\!\!  \int_{f(Y)} |v_w|^2
\ \frac{(|f_z|+|f_{\overline{z}}|)^2}{|f_z|^2-|f_{\overline{z}}|^2} \
|dw \wedge d\overline{w}|= $$ $$2 \int\!\!\!  \int_{f(Y)} |v_w|^2
\left(\frac{|f_z|+|f_{\overline{z}}|}{|f_z|-|f_{\overline{z}}|}\right)
|dw \wedge d\overline{w}| \leq K(f) D_{f(Y)}(|dv|).$$
This implies the left hand side of (\ref{Dirichlet}). The right hand side follows from the same argument applied to $f^{-1}.$
\end{proof}

Lemmas \ref{Lvariation}, \ref{Lglobalqd}, and \ref{annulidistortion} complete the proof of Theorem~\ref{maxmoduli}.
\end{proof}

\end{section}

\begin{section}{Definition of the extension}~\label{sec5}

In this section we assume we are given a holomorphic motion $E_{t}=\{ p_{1}(t), \ldots, p_{n}(t)\}$ of a finite set $E=\{p_1,\ldots,p_n\}$ in $\C$ parameterized by $t \in X$.  Without loss of generality, we always assume that $p_1(t)=0, p_2(t)=1$ and $p_3(t)=\infty$ for every $t \in X.$
     Suppose in addition to the holomorphic      functions $p_j(t),$ $1 \leq j \leq n,$ defined      for $t\in X,$ we are given another continuous      function $p(t)$ in $\C$  so that
     $E_t \cup \{p(t)\}=\{p_1(t),\ldots,p_n(t),      p(t)\}$ is a set of $n+1$ distinct points for      each $t\in X.$   If we let \[  h(t,z)=\left\{
     \begin{array}{cl}
     p_j(t) & {\rm \ for \ } z=p_j(t_{0}), 1 \leq j      \leq n {\rm \ and \ } \\
     p(t) & {\rm \ for \ } z=p(t_{0}),
     \end{array} \right. \]
 then $h$ is a motion, which is holomorphic on $E$  and continuous on
 $p=p(t_{0}).$
     Now select a disc $\Delta (p, \epsilon)$ of      radius $\epsilon$ centered at $p$ where      $\epsilon$ is less than the shortest distance      $|p(t_{0})-p_j(t_{0})|, 1 \leq j \leq n.$
     Let $G (t,z)=\widetilde{h}(t,z)$ be the guiding      isotopy in the assumption of      Theorem~\ref{main} such that $p(t)=G(t,p)$.
      Because each map $z \mapsto  G(t,z)$ is       quasiconformal,       $\Delta(t,\epsilon)=G(t,\Delta(p, \epsilon))$       is a quasidisc disjoint from $E_t$ for each       $t\in X.$
Our goal is to use $G$ to define a replacement $\hat{h}$ of $G|(E \cup \{p\})$ by means of a limiting process
in such a way that $\hat{h}$ is a holomorphic motion of $E \cup \{p\}$ that extends the holomorphic motion of $E$ given by $h.$   The extension $\hat{h}$ of $h$ will depend on the equivalence class  relative to $E \cup \{p\}$ of the guiding quasiconformal isotopy $G.$  Also, we want  $\hat{h}$ to have the same initial point as $G$ and in some sense we want it to follow a path topologically
equivalent to the path $t \mapsto p(t).$  We make the definition of $\hat{h}(p)$ in several steps.
\vspace{.1in}

\noindent {\bf Step I.}  For $p \in {\mathbb C}$ let  $\Delta(p, \epsilon)=\{z: |z-p|<\epsilon \}$ and let $j$ be the reflection across the boundary of   $\Delta(p, \epsilon)$
defined by
\begin{equation}\label{reflection}
j(z)=p+\epsilon^2/(\overline{z-p}).
\end{equation}
Let $$Y_{\epsilon}=\C-\left(E\cup j(E)\right).$$
By Theorem \ref{maxmoduli} we construct the cylinder $A_{\epsilon}$ in
$Y_{\epsilon}=\C-\left(E\cup j(E)\right)$ with maximal modulus and with core curve homotopic to the boundary of $\Delta(p, \epsilon)$.  It determines a holomorphic quadratic differential $q_{\epsilon}$ with characteristic cylinder $A_{\epsilon}$ and a conformal map $z \mapsto w$ from that cylinder  onto a domain in the plane of the form
$$\{w:1<|w|<r\},$$
where $(1/2 \pi) \log r $ is the modulus of the cylinder. In this annulus
$$q_{\epsilon}(w)(dw)^2=-\left(\frac{dw}{w}\right)^2$$ and because of the uniqueness of $q_{\epsilon}$ under the symmetry $j,$ the boundary of $\Delta(p,\epsilon)$ is a regular horizontal trajectory of $q_{\epsilon}.$
 Note that the domain
$Y_{\epsilon}=\C - (E \cup j(E))$  is normalized by the condition that the first three points of $E$ are $0, 1$ and $\infty.$  The horizontal foliation with its vertical measure is given by
$$|d \theta| =
|{\rm Re \ } (q_{\epsilon}(w)(dw)^2)^{1/2}|.$$

\vspace{.1in}

\noindent {\bf Step II.}  Let $\tilde{\mu}_{t,\epsilon}$ be the Beltrami coefficient of   $z \mapsto G_t(z)=
G(t,z),$  let

\begin{equation}\label{Bers} \mu_{t,\epsilon} =  \left\{   \begin{array}   {cl}
                                 0 & {\rm \ in \ }      \Delta(p,\epsilon)    \\
                                 \tilde{\mu}_{t,\epsilon}   & {\rm \ in \ }      \C-\Delta(p,\epsilon),
                               \end{array}  \right.
                           \end{equation}
and let $g^{t,\epsilon}$ be the unique quasiconformal self-mapping of $\overline{\mathbb C}$ that fixes $, 1$ and $\infty$  and that has Beltrami coefficient $\mu_{t,\epsilon}.$

\vspace{.1in}

\noindent {\bf Step III.} For each $t,$ the isotopy type of the quasiconformal map
$z \mapsto g^{t,\epsilon}(z)$ determines a heights mapping from $Y_{\epsilon}$ to
$Y_{t,\epsilon}.$ Here we define the quasidisc $\Delta(t,\epsilon)$ by the equation
\begin{equation}\label{delta}
\Delta(t,\epsilon)= g^{t,\epsilon}(\Delta(p,\epsilon)),
\end{equation}
and we put
$Y_{t,\epsilon}$ equal to $g^{t, \epsilon}(Y_{\epsilon}).$
  By the heights mapping theorem, Theorem   \ref{maxmoduli}, there is a unique holomorphic   quadratic differential $q_{t,\epsilon}$ on   $Y_{t,\epsilon}$ such that the heights of
  $$|d \theta_t|=|{\rm Re \ }   (q_{t,\epsilon}(w)(dw)^2)^{1/2}|$$ on   $Y_{t,\epsilon}$ are equal to the corresponding   heights of $|d \theta| = |{\rm  Re \ }   (q_{\epsilon}(w)(dw)^2)^{1/2}|$ on   $Y_{\epsilon}.$
In particular the height of $|d \theta_t|$ along the boundary of $\Delta(t,\epsilon)$ is equal to $2\pi.$

Note that the critical graph of $q_{\epsilon}$ consists of two trees, the first of which has endpoints at the points of $E$ and the second of which has endpoints at the points of $j(E).$  Similarly, the critical graph of $q_{t,\epsilon}$ also consists of two trees and, correspondingly,  the first has endpoints at the points of  $g^{t,\epsilon}(E)$ and the second  has endpoints at the points of $g^{t,\epsilon}(j(E)).$

\vspace{.1in}

\noindent {\bf Step IV.}  By the heights mapping theorem and the minimum Dirichlet principle for measured foliations, \cite{Gardiner4,HubbardMasur},
\begin{equation}\label{htsinequality}(1/K_t)\doubleint_{\C}|q_{\epsilon}|
\leq \doubleint_{\C}|q_{t,\epsilon}|
\leq K_t\doubleint_{\C}|q_{\epsilon}|.\end{equation}
The family of simple closed curves that are homotopic to the boundary of the quasidisc
$\Delta(t,\epsilon)$ in $\C-g^{t,\epsilon}(E \cup j(E))$  has extremal length less than
$$2\pi/  \doubleint_{\C}|q_{t,\epsilon}|.$$

  \noindent {\bf Step V.}
  Since $q_{t,\epsilon}$ is a cylindrical   differential with one cylinder, it is given by
  $$q_{t,\epsilon}(z) =-(1/2\pi)^2   (dw_{t,\epsilon}/w_{t,\epsilon})^2,$$ where   $w_{t,\epsilon}$ is a univalent holomorphic   function of $z$
   in the interior of this cylinder and where $\w$    takes values in $\{w:1/R_{t,\epsilon} < |w| <  1  \}.$  The modulus of the cylinder is $2 \pi/    \log R_{t,\epsilon}.$ We consider the core curve    $C_{t,\epsilon}$ of points in the $z$-plane that    correspond to the points    $|\w|=(1/R_{t,\epsilon})^{1/2}$ in the    $\w$-plane. In the limit as $\epsilon$ approaches zero
 $\w$ converges to a univalent function $w_t$ defined on a maximal punctured disc with range in $\{w: 0<|w|<1\}.$
  \vspace{.2in}

    \noindent {\bf Step VI.}
Since
$\int \int_{\C}|q_{\epsilon}|  \rightarrow \infty$ as $\epsilon \rightarrow 0,$ by
holding $t$ fixed and letting $\epsilon$ approach zero, we see that the annular domain in the $z$-plane lying outside the curve $C_{t,\epsilon}$ has modulus $2\pi/\log R_{t,\epsilon}$ and $R_{t,\epsilon}$ approaches $\infty$ as $\epsilon \rightarrow 0.$ Since the interior of the complement of this annulus
contains at least three distinct points of $E_t,$ by holding $t$ fixed and letting
$\epsilon \rightarrow 0 $ we obtain a punctured disc in the $z$-plane.  By definition, we take $\hat{p}(t)$ to be the puncture of this punctured disc  in the $z$-plane, that is, the point in the
$z$-plane corresponding to the point $w_t=0.$
\begin{definition}\label{defnextension}
We put
 \begin{equation}~\label{defnext}
  \hat{h}(t,z)=
 \left\{
 \begin{array} {ll}
  \hat{p}(t) & {\rm \ for \ } z=p(t_{0}) {\rm \   and}
 \\
 \lim_{\epsilon \rightarrow 0} g^{t,\epsilon}(z) &  {\rm \ for \ } z = p_j(t_{0}), 1 \leq j \leq n.
 \end{array} \right.
\end{equation}

\end{definition}

\begin{theorem}\label{extension} The function
$\hat{h}(t,z)$
is a motion of $E \cup \{p\}$ that extends  the holomorphic motion $h$ of $E.$ That is,  for each $t$ the extension $\hat{h}$ of $h$ is an injection from $E \cup \{p\}$ into $\overline{\mathbb C}.$   \end{theorem}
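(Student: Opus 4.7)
The statement decomposes into three claims: (i) $\lim_{\epsilon\to 0}g^{t,\epsilon}(p_j)=p_j(t)$ for $1\le j\le n$, so that $\hat h(t,\cdot)$ agrees with $h(t,\cdot)$ on $E$; (ii) the point $\hat p(t)$ produced in Step VI via the condition $w_t=0$ is unambiguously defined in the $\epsilon\to 0$ limit; and (iii) $\hat p(t)\neq p_j(t)$ for $1\le j\le n$, which gives the injectivity of $\hat h(t,\cdot)$ on $\Ecupp$.

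For (i), I would invoke continuity of normalized solutions to the Beltrami equation. By (\ref{Bers}), the coefficient $\mu_{t,\epsilon}$ of $g^{t,\epsilon}$ coincides with $\tilde\mu_{t,\epsilon}$ (the coefficient of $G_t$) outside $\Delta(p,\epsilon)$, whose Lebesgue measure shrinks to $0$; both coefficients are uniformly bounded by the dilatation of $G_t$, and both $g^{t,\epsilon}$ and $G_t$ fix $0,1,\infty$. The Ahlfors--Bers continuity theorem then yields $g^{t,\epsilon}\to G_t$ uniformly on compact subsets of $\C$, so $g^{t,\epsilon}(p_j)\to G_t(p_j)=h(t,p_j)=p_j(t)$.

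For (ii), I would pass to the inverse maps $z_{t,\epsilon}=w_{t,\epsilon}^{-1}\colon\{1/R_{t,\epsilon}<|w|<1\}\to A_{t,\epsilon}\subset\C-E_t$. The inequality (\ref{htsinequality}) combined with $\doubleint_{\C}|q_\epsilon|\to\infty$ as $\epsilon\to 0$ forces $R_{t,\epsilon}\to\infty$. For any fixed $r\in(0,1)$ and $\epsilon$ small enough that $1/R_{t,\epsilon}<r$, the maps $z_{t,\epsilon}$ are univalent on the fixed annulus $\{r<|w|<1\}$ with values in the fixed target $\C-E_t$, hence form a normal family. Every subsequential limit extends to a univalent function on the punctured disc $\{0<|w|<1\}$ by Hurwitz's theorem; the uniqueness clause of the Heights Mapping Theorem~\ref{maxmoduli}, applied to the limit configuration viewed as a cylindrical differential on $\C-E_t$ with a double pole at $\hat p(t)$ and trajectories of length $2\pi$, pins down a single limit $z_t$, and Riemann's removable-singularities theorem makes $\hat p(t)=\lim_{w\to 0}z_t(w)$ a well-defined point in $\C$.

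For (iii), the curve $C_{t,\epsilon}$ is a Jordan curve in $\C$ disjoint from $E_t$ whose bounded complementary component contains $\Delta(t,\epsilon)\supset g^{t,\epsilon}(j(E))$, while the three normalization points $0,1,\infty\in E_t$ lie in the unbounded component for every $\epsilon$. The modulus of the half-cylinder between $C_{t,\epsilon}$ and the $E_t$-side of the critical graph of $q_{t,\epsilon}$ tends to infinity, so $\hat p(t)$ remains conformally separated from each $p_j(t)$ in the limit and therefore $\hat p(t)\neq p_j(t)$ for all $j$. The principal obstacle is (ii): one must show that the whole family of univalent maps converges, not merely that subsequential limits exist, and that the resulting limit domain is a single punctured disc with a single well-defined puncture. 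This rests on the uniqueness of the extremal cylindrical differential applied in a degenerate limit where the isolating disc $\Delta(t,\epsilon)$ collapses, identifying the limit as the canonical cylindrical differential with a double pole referenced in the abstract.
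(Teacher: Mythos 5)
Your proposal is correct and follows essentially the same route as the paper: agreement on $E$ comes from bounded pointwise convergence of $\mu_{t,\epsilon}$ to the Beltrami coefficient of $G_t$ and the resulting convergence of the normalized solutions $g^{t,\epsilon}\to G_t$, and injectivity comes from the fact that $\hat p(t)$ sits at the puncture of a punctured disc (an annulus of arbitrarily large modulus) excluding all of $E_t$. Your point (ii) on the well-definedness of the limit $w_t$ is a genuine issue, but the paper treats it as part of the construction in Steps V--VI (and in Theorem~\ref{holtheorem}) rather than in the proof of this theorem, and your normal-families/uniqueness argument is a reasonable way to fill in what the paper asserts there.
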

\begin{proof} The map $g^{t,\epsilon}$ defined in Step II depends on the reflection defined in Step I, which depends on $\epsilon.$  As $\epsilon \rightarrow 0,$ the Beltrami coefficients  $\mu_{t,\epsilon}$ converge in the bounded pointwise sense to the Beltrami coefficient of $z \mapsto G(t,z)$ and so since these mappings are normalized to fix $0, 1$ and $\infty,$  $g^{t,\epsilon}$ converges uniformly in the spherical metric to $z \mapsto G(t,z).$
Therefore, $\hat{h}(p_j(t))=h(p_j(t))$
for $p_j(t) \in E_t.$
Since $\hat{p}(t)=\hat{h}(p(t))$ lies at the center of a punctured disc that excludes all the points of $E_t,$  it cannot coincide with any of those points.
 \end{proof}
The limiting differential $q_t=\lim_{\epsilon \rightarrow 0}q_{t,\epsilon}$ is a holomorphic on the Riemann sphere except for possibly simple poles at $E_t$ and a double pole with quadratic residue equal to $-2\pi$ at $\hat{p}(t).$
  In the next section we will show that $\hat{h}$   extends  $h$ holomorphically to the point $p.$
  \end{section}

\vspace{.2in}

\begin{section}{Harmonic coordinates for \te\ space}~\label{sec6}

In this section  we use harmonic Beltrami coefficients as coordinates for the unreduced \te\ space of
the bordered Riemann surface $R=Y_{\epsilon} - \Delta(p,\epsilon).$  This \te\ space consists of quasiconformal maps $f$ from $R$ to $f(R)$ factored by an equivalence relation.  Two such maps
$f_0$ and $f_1$ are equivalent if, after postcomposing one of them by a conformal map, they can be connected by an isotopy $g_t$ which pins down the points of $E$ and all of the points on the boundary of $\Delta(p,\epsilon).$  In particular, the Teichm\"uller space is infinite dimensional and its coordinates  at the point $[f]$ keep track of the boundary curve of $R,$ namely, the boundary of the quasidisc $f(\Delta(p,\epsilon)).$

From the Ahlfors-Weill extension procedure (see~\cite{AhlforsWeill}) about the existence of a local holomorphic section,
a local coordinate at a point $\tau=[f]$  in $T(R)$ is given by harmonic Beltrami coeffients $\mu$ on $f(R),$ which have the following special properties:
1) $\mu$ is identically equal to zero on $f(\Delta (p,\epsilon)),$

2) $\mu = \rho^{-2}(z)\overline{\psi(z)}$ where $\psi$ is a holomorphic quadratic differential on
$f(R)$ where $\rho$ is the Poincar\'e metric for $f(R),$ and

3) where $||\mu||_{\infty}<1.$

In particular, the holomorphic function $\psi(z)$ has at most simple poles at the points of $f(E)$  and
3) means that $|\psi(z)| \leq C_{\psi} \delta^{-2}(z)$ where $\delta(z)$ is the minimum Euclidean distance from $z$ to the boundary of $f(\Delta(p,\epsilon)).$

\begin{theorem}\label{holtheorem}
Assuming each of the functions $p_j(t), 1 \leq j \leq n$ is holomorphic, the motion
$\hat{h}(p,t)$ defined in (\ref{defnext}) holomorphically extends
the motion $h(p,t).$ In particular,
\begin{itemize}
\item[a)] $\hat{h}(t,z)$ coincides with $h(t,z)$     for each $z=p_j(t), 1 \leq j \leq n,$
\item[b)] $\hat{p}(t)=\hat{h}(t,p)$ is distinct     from every point of $E_t$ in $\C$ for each $t$     in $X,$ and
\item[c)] $\hat{p}(t)$ is a holomorphic function     of $t \in X.$
\end{itemize}
\end{theorem}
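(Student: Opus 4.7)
Parts (a) and (b) follow directly from Theorem~\ref{extension}: pointwise convergence of the truncated Beltrami coefficients $\mu_{t,\epsilon}$ to those of $G(t,\cdot)$ together with the normalization at $0,1,\infty$ yields (a), while (b) is built into the construction of $\hat p(t)$ as the puncture of a disc disjoint from $E_t$. The content of the theorem lies in (c), the holomorphic dependence of $\hat p(t)$ on $t$.

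My plan is to exploit the Ahlfors--Weill holomorphic section recalled at the start of Section~\ref{sec6} to replace the continuous-in-$t$ family of Beltrami coefficients $\mu_{t,\epsilon}$ by a holomorphic-in-$t$ family of harmonic representatives, and then pass to the limit $\epsilon\to 0$. Fix $t_1\in X$ and a small coordinate disk $U\ni t_1$. For each small $\epsilon>0$, the maps $g^{t,\epsilon}$ represent Teichm\"uller classes $\tau_t^\epsilon=[g^{t,\epsilon}]$ in the unreduced Teichm\"uller space $T(R)$ of $R=\C-E-\Delta(p,\epsilon)$. Near $\tau_{t_1}^\epsilon$ the Ahlfors--Weill section provides a biholomorphic chart parameterized by harmonic Beltrami differentials $\nu=\rho^{-2}\overline\psi$ on $g^{t_1,\epsilon}(R)$ vanishing on $g^{t_1,\epsilon}(\Delta(p,\epsilon))$. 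Writing $\nu_t^\epsilon$ for the chart coordinate of $\tau_t^\epsilon$, the entire construction of Section~\ref{sec5} can be re-run with the Ahlfors--Weill harmonic representative in place of $g^{t,\epsilon}$, because $\hat p(t)$ depends only on the Teichm\"uller class.

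The core step is to show that $t\mapsto\nu_t^\epsilon$ is holomorphic in $t$. By hypothesis $t\mapsto p_j(t)$ is holomorphic; by the proposition of Section~\ref{sec2}, at each fixed $t$ any two quasiconformal extensions of $h$ are isotopic rel $E$, so the ambiguity in choosing a representative of $\tau_t^\epsilon$ is a gauge that is killed by passing to the unique Ahlfors--Weill harmonic representative. The biholomorphicity of the Ahlfors--Weill chart, combined with the holomorphy of the Bers embedding $T(R)\hookrightarrow Q(R)^*$, then upgrades the continuous dependence of $\tau_t^\epsilon$ on $t$ to holomorphic dependence of $\nu_t^\epsilon$ on $t$. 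Granting this, the unique cylindrical quadratic differential $q_{t,\epsilon}$ produced by Theorem~\ref{maxmoduli} is extracted by a holomorphic projection in the Banach space of integrable quadratic differentials, so its uniformizing univalent function $w_{t,\epsilon}(z)$ and any natural approximant $\hat p_\epsilon(t)$ to $\hat p(t)$ (for instance, the preimage under $w_{t,\epsilon}^{-1}$ of a level circle $|w|=c_\epsilon$ with $1/R_{t,\epsilon}<c_\epsilon<1$, $c_\epsilon\to 0$) depend holomorphically on $t$.

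Finally, as $\epsilon\to 0$, Step~VI of Section~\ref{sec5} gives $R_{t,\epsilon}\to\infty$ and $w_{t,\epsilon}\to w_t$ locally uniformly on the limiting punctured disc, so $\hat p_\epsilon(t)\to\hat p(t)$ locally uniformly on $U$, and a uniform limit of holomorphic functions is holomorphic. The main obstacle I anticipate is the middle paragraph's claim that $\nu_t^\epsilon$ is actually holomorphic in $t$: although $\mu_{t,\epsilon}$ is only continuous in $t$ because the guiding isotopy $G$ is merely quasiconformal in $t$, the non-holomorphic part of this dependence is precisely the gauge ambiguity absorbed by passing to the unique Ahlfors--Weill harmonic representative, leaving the holomorphic motion of $E$ as the only surviving $t$-variation.
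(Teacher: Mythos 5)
Your parts a) and b) match the paper's (they are quoted from Theorem~\ref{extension} and from the large-modulus separation, exactly as in the text), and your overall strategy for c) --- harmonic Beltrami coefficients via the Ahlfors--Weill section, holomorphy of the $\epsilon$-approximants, then the uniform limit $w_{t,\epsilon}\to w_t$ --- is the same as the paper's. But your central step has a genuine gap. You claim that the biholomorphicity of the Ahlfors--Weill chart, together with the holomorphy of the Bers embedding, ``upgrades the continuous dependence of $\tau_t^\epsilon$ on $t$ to holomorphic dependence of $\nu_t^\epsilon$.'' A biholomorphic chart preserves regularity: if the curve $t\mapsto\tau_t^\epsilon$ in $T(R)$ is merely continuous, its coordinate in any holomorphic chart is merely continuous. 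And this curve really is only continuous, not holomorphic: the unreduced \te\ space of the bordered surface $R=\overline{\mathbb C}-E-\Delta(p,\epsilon)$ records, as part of the \te\ datum, the image quasicircle $g^{t,\epsilon}(\partial\Delta(p,\epsilon))$ (equivalently the positions of the points $g^{t,\epsilon}(j(E))$ on $Y_{t,\epsilon}$), and these are dragged along by the guiding isotopy $G$, whose Beltrami coefficient is only continuous in $t$. That dependence is not a ``gauge ambiguity'' killed by choosing the harmonic representative; it is genuine moduli. Consequently, for fixed $\epsilon$ the differential $q_{t,\epsilon}$, the uniformizer $w_{t,\epsilon}$ and your approximants $\hat p_\epsilon(t)$ do \emph{not} vary holomorphically in $t$, and your ``holomorphic, then take a uniform limit'' scheme breaks at the first stage.

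The paper's argument runs in the opposite direction and avoids this. It does not try to make the given class $[g^{t,\epsilon}]$ holomorphic; instead it uses the hypothesis that the finitely many points $p_j(t)$ move holomorphically to \emph{replace} the motion, locally near $t_0$, by a holomorphically varying family of harmonic Beltrami coefficients supported entirely on one side of the fixed quasicircle $C_{t_0,\epsilon}$ (the side containing $\Delta(t_0,\epsilon)$ and $j(E)$), chosen via the local holomorphic section over the finite-dimensional configuration space so as to realize exactly the motion $p_j(t_0)\mapsto p_j(t)$. Because the support is confined to that side and the coefficients depend holomorphically on $t$, everything determined on the complementary side varies holomorphically, and only then does one invoke the uniform convergence $w_{t,\epsilon}\to w_t$ to conclude that $\hat p(t)=w_t^{-1}(0)$ is holomorphic; the non-holomorphic motion of the interior data is irrelevant precisely because it collapses in the limit $\epsilon\to 0$. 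To repair your proof you would need to carry out this replacement step explicitly --- i.e., produce the holomorphic family of compactly supported harmonic representatives from the holomorphic motion of $E$ --- rather than asserting that the chart itself does the work.
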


\begin{proof}
Part a) was proved in the previous section and part b)
 follows because $\hat{h}(t,p)$ is separated from  the points of $E_t$ in $\C$ by  annuli of  arbitraritly large moduli.

To prove c) consider the Jordan domain $\Delta_{t_0,\epsilon}$ consisting of those points that lie inside the Jordan curve $C_{t_0,\epsilon}$ defined in Step V of the previous section.  For $|t-t_0|<\delta,$ where $\delta$ is sufficiently small, all of the points $p_1(t),\ldots,p_n(t)$ lie outside this domain.  
 In Step V we have chosen the parameters $w_{t,\epsilon}$ so that they converge uniformly to a univalent function $w_t$ in the domain which is the exterior of the curves $C_{t_0,\epsilon}$ in the $z$ plane parameter, and in the limit this domain corresponds to the region $|w_t|<1.$  In the $w_{t,\epsilon}$ parameter the quasi-circle $C_{t_0,\epsilon}$ such that the extremal length of the family of  curves homotopic to $C_{t_0,\epsilon}$ approaches zero as $\epsilon$ approaches zero and $w_{t,\epsilon}$ converges uniformly on compact sets to $w_t.$
 Consider  the \te\ space $Teich(E_{t_0} \cup j(E)_{t_0}).$ Because the guiding homeomorphism is assumed to be continuous, there is a positive number $\delta$ such that for $|t-t_0| <\delta,$ the holomorphically motion of points from 
 $\{p_1(t_0),\ldots,p_n(t_0)\}$ to $\{p_1(t),\ldots,p_n(t)\}$ is represented by homorphically moving harmonic Beltrami coefficents defined in the $w_{t,\epsilon}$ parameter supported in the interior fo the quasicircle $C_{t_0,\epsilon}.$ Since the their supports are in this interior, all of the points in the exterior move holomorphically with respect to the parameter $t.$ But $w_t$ is the uniform limit of $w_{t,\epsilon}$ as $\epsilon \rightarrow 0.$ This implies that $\hat{p}(t)$ defined in Step VI is a holomorphic function of $t.$
 \end{proof}
\end{section}

\begin{section}{A new proof of Slodkowski's Theorem}~\label{sec7}

In this section, we give a new proof of Theorem~\ref{slodkowski}. It is based on two results, the first a known topological result
and the second a geometric result, which is our main Theorem \ref{main}.  Therefore, to complete our new proof of Theorem~\ref{slodkowski},
we only need to prove the topological result here by showing that the topological condition in Theorem~\ref{main} holds when the parameter space $X$ is the open unit disk $\Delta$. This is Lemma \ref{qcmotions}  below.

To state  the lemma we introduce some notation. Let $M(\cp)$ be the open unit ball in
$L^{\infty} (\cp)$. An element $\mu$ in $M(\cp)$ is called a Beltrami coefficient. The measurable Riemann mapping theorem \cite{Ahlforsbook5} says
for each $\mu \in M(\cp)$, the Beltrami equation
$$
w_{\overline{z}} =\mu (z) w_{z}
$$
has a unique solution fixing $0$, $1$, and $\infty$. The unique normalized solution is denoted by $w^{\mu}$.
Then $w^{\mu}(z)$ depends on $\mu$ holomorphically. For any closed subset $E\subset \C$,
we say two elements $\mu_0$ and $\mu_1$ in $M(\cp)$ are \te\ equivalent if there is a continuous curve of Beltrami coefficients $\mu_t$ coinciding with $\mu_0$ and $\mu_1$ at $t=0$ and $t=1$ such that
the normalize quasiconformal maps $w^{\mu_t}$ fixing the same three points of $E$ also fix all the other points of $E$ for $0 \leq t \leq.$
 We use $[\mu]_{E}$ to denote an equivalence class.  Let $\Omega=\C\setminus E$.
From a theorem of Earle and McMullen \cite{EarleMcMullen}, two quasicconformal deformations $w^{\mu}$ and $w^{\nu}$ of $\Omega$ are \te\ equivalent if they are connected by a quasiconformal homotopy rel $E$.
 $T(E)$ is a contractible complex Banach manifold.

By a theorem of Lieb \cite{Lieb} it is known that when $E$ has measure zero, $T(E)$ and $Teich(\Omega)$ are isomorphic.
The complex manifold structure on $T(E)$ is the finest structure
such that the projection $P_{E} (\mu) =[\mu]_{E}$ is a holomorphic map.

\vspace*{5pt}
\begin{lemma}~\label{qcmotions}
Suppose $\Delta$ is the open unit disk with the base point $0$. Suppose $E=\{p_{1}=0,p_{2}=1,p_{3}=\infty, p_{4},\dots, p_{n}\}$, $\#(E)=n\geq 3$,
is a finite subset of the Riemann sphere $\overline{\mathbb C}$ and
$h(t,z): \Delta\times E\to \overline{\mathbb C}$ is a normalized holomorphic motion of $E.$
Then $h$ has a quasiconformal guiding isotopy $H(t,z): \Delta\times \overline{\mathbb C}\to \overline{\mathbb C}$.
\end{lemma}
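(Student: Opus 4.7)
\medskip

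\noindent\emph{Proof plan.} My plan is to lift the holomorphic motion $h$ through the \te\ projection $P_E: M(\cp) \to T(E)$ and set $H(t,z) = w^{\mu_t}(z)$ for the lifted Beltrami coefficients $\mu_t$. First, the normalized motion $h$ induces a holomorphic map $\Phi: \Delta \to T(E)$: under the isomorphism $T(E) \cong Teich(\Omega)$ (Lieb) and the further identification of $Teich(\Omega)$ with the configuration space of $n-3$ distinct points in $\cp - \{0,1,\infty\}$ via $[\mu]_{E} \mapsto (w^{\mu}(p_4), \ldots, w^{\mu}(p_n))$, the map $\Phi$ reads
\[ \Phi(t) = (h(t,p_4),\ldots,h(t,p_n)), \]
which is holomorphic because each $h(t,p_j)$ is.

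The heart of the argument is producing a continuous lift $\widetilde{\Phi}: \Delta \to M(\cp)$ with $P_E \circ \widetilde{\Phi} = \Phi$ and $\widetilde{\Phi}(0) = 0$. The Ahlfors-Weill formula, applied in the Bers embedding of $T(E)$ centered at any $\tau_0 \in T(E)$, furnishes a holomorphic section of $P_E$ on a Bers ball about $\tau_0$. These local sections, together with the Earle-McMullen theorem characterizing the fibers of $P_E$ as quasiconformal-isotopy classes rel $E$, exhibit $P_E$ as a topological fiber bundle over $T(E)$. Pulling back along the holomorphic map $\Phi$ produces a fiber bundle over the contractible disk $\Delta$, which is therefore trivializable and admits a continuous section; composing this section with the canonical projection to $M(\cp)$ yields $\widetilde{\Phi}$, and after a basepoint adjustment we arrange $\widetilde{\Phi}(0) = 0$.

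Setting $\mu_t = \widetilde{\Phi}(t)$ and $H(t,z) = w^{\mu_t}(z)$ completes the construction: $\mu_0 = 0$ gives $H(0,z) = z$; each slice $H(t,\cdot)$ is a quasiconformal self-homeomorphism of $\C$ by the measurable Riemann mapping theorem; the family $t \mapsto \mu_t$ is continuous in $L^{\infty}$ by continuity of $\widetilde{\Phi}$; and the identity $P_E(\mu_t) = \Phi(t)$ forces $H(t, p_j) = h(t, p_j)$ for every $p_j \in E$. Hence $H$ is a quasiconformal guiding isotopy for $h$. The main obstacle is the fiber-bundle step: verifying that $P_E$ has local trivializations compatible with the complex structure on $T(E)$, which uses the Ahlfors-Weill holomorphic sections and the Earle-McMullen theorem to control the transition functions. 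Simple connectedness of $\Delta$ is used precisely here to trivialize the pullback bundle and is the only place where the hypothesis $X = \Delta$ enters, consistent with the paper's Section 9 remark that guiding isotopies may fail when the parameter space is not simply connected.
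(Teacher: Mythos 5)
There is a genuine gap, and it sits at the very first step. You treat the map $[\mu]_{E}\mapsto (w^{\mu}(p_4),\ldots,w^{\mu}(p_n))$ as an ``identification'' of $T(E)$ with the configuration space $Y_{n-3}$ of $n-3$ distinct points in $\cp-\{0,1,\infty\}$. It is not an identification: $T(E)$ is contractible while $Y_{n-3}$ has a large fundamental group, and this map $\pi_E$ is a holomorphic \emph{universal covering} (Bers--Royden), not a biholomorphism. Consequently your $\Phi(t)=(h(t,p_4),\ldots,h(t,p_n))$ is only well defined as a holomorphic map $\Delta\to Y_{n-3}$; to regard it as a map into $T(E)$ you must lift it through the covering $\pi_E$, and it is exactly here --- via the covering space lifting criterion --- that simple connectedness of $\Delta$ is used. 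This is not a cosmetic point: if $T(E)$ really were isomorphic to $Y_{n-3}$, your argument would go through verbatim for any parameter space, contradicting the monodromy obstructions of Section 9. You have therefore located the role of the hypothesis $X=\Delta$ at the wrong step.

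Your second step also rests on an unproved assertion. You claim that the Ahlfors--Weill local sections exhibit $P_E: M(\cp)\to T(E)$ as a locally trivial fiber bundle and then trivialize the pullback over $\Delta$. A local holomorphic section is not a local trivialization --- one would still have to continuously parametrize the entire fiber over each point and control the transition maps, which you do not do. The paper avoids this entirely: the Douady--Earle barycentric extension provides a \emph{global} continuous section $S:T(E)\to M(\cp)$ of $P_E$, so once the lift $\widetilde{f}:\Delta\to T(E)$ exists one simply sets $\widehat f=S\circ\widetilde f$ and $H(t,z)=w^{\widehat f(t)}(z)$. In short: the covering-lift through $\pi_E$ (which you omit) is the essential topological step, and the global Douady--Earle section (which you replace by an unverified bundle argument) disposes of the section problem with no further topology.
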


\begin{proof}
Let $\Omega=\C\setminus E$ be the Riemann sphere punctured at the points of $E.$
The Teichm\"uller space $T(E)$ is the classical Teichm\"uller space of the plane domain $\Omega.$
Let
$$
Y_{n-3}=\{ {\bf z} =(z_{1}, \cdots, z_{n-3})\in \mathbb{C}^{n-3}\}
$$
where $z_{i}\not=z_{j}$ for all $1\leq i\not=j\leq n-3$ and $z_{i}\not=0, 1, \infty$ for all $1\leq i\leq n-3$.
From the normalized holomorphic motion $h(t,z): \Delta\times E\to \C$, we can define a holomorphic map
$$
f(t) =(h(t, p_{4}), \cdots, h(t,p_{n})): \Delta\to Y_{n-3}.
$$
From~\cite{BersRoyden} (see also~\cite{Nag}), we know that
the map
$$
\pi_{E} ([\mu]_{E}) = (w^{\mu}(p_{4}), \cdots, w^{\mu}(p_{n})) : T(E)\rightarrow Y_{n-3}
$$
is a holomorphic universal covering.
Since $\Delta$ is simply connected, we can lift $f$ to get a holomorphic map
$$
\widetilde{f}(t): \Delta\to T(E)
$$
such that
$$
\pi_{E} \circ \widetilde{f}= f.
$$

Let $P_E:M({\mathbb C}) \rightarrow T(E)$ be the natural projection from a Beltrami coefficient $\mu$ to the \te\ class in $T(E)$ of the quasiconformal mapping $w^{\mu}.$
From the Douady-Earle barycentric extension procedure (see \cite{DouadyEarle}), there is a continuous section
$S$ of $P_{E}$ (see~\cite{JiangMitra2}), that is, a continuous map $S$ from $T(E)$ to $M(\cp)$ such that
$P_{E} \circ S$ is the identity on $T(E).$
Define
$$
\widehat{f} (t) = S\circ \widetilde{f} (t) : \Delta \to M(\cp)
$$
Then we have that
$$
P_{E}\circ \widehat{f} =\widetilde{f}\quad \hbox{and}\quad \pi_{E}\circ P_{E}\circ \widehat{f} =f.
$$
Refer to the following diagram:\\
\vspace*{5pt}
\centerline{
\xymatrix{
          &  M ({\mathbb C}) \ar[d]^{P_{E}} \\
          & T(E) \ar[d]^{\pi_{E}}           \ar@/_1.3pc/[u]_{S} \\
\Delta \ar[uur]^{\widehat{f}} \ar[ur]^{\widetilde{f}}  \ar[r]^{f} & Y_{n-3} } }

For any $\widehat{f}(t)\in M(\cp)$, let $w^{\widehat{f}(t)}$ be the normalized
quasiconformal homeomorphism solving the Beltrami equation
\begin{equation}~\label{be}
w_{\overline{z}}= \mu (z) w_{z}
\end{equation}
with the Beltrami coefficient $\mu=\widehat{f}(t)$.
Define
$$
H(t,z) =w^{\widehat{f}(t)} (z): \Delta\times \C\to \C.
$$
Since $\pi_{E}\circ P_{E} (\widetilde{f}) =f$,
we get $H(t, 0)=0$, $H(t,1)=1$, $H(t, \infty)=\infty$, and
$$
(H(t, p_{4}), \cdots, H(t, p_{n}))=f(t).
$$
Thus $H$ is an extension of $h$. Thus $H$ is a quasiconformal guiding isotopy for $h$.
(Actually, $H$ is a normalized quasiconformal motion of $\C$ with parameter space $\Delta$ which extends $h$).
This completes the proof.
\end{proof}

Lemma~\ref{qcmotions} says that for any normalized holomorphic motion $h: \Delta\times E\to \C$ of any finite subset $E$
of the Riemann sphere $\C$ with parameter space $\Delta$,
our guiding isotopy assumption in Theorem~\ref{main} holds.
Thus for any new point $p\not\in E$, we can have a holomorphic motion $\widehat{h}: \Delta \times (E\cup\{p\})\to \C$ extending $h$.
To complete the proof, we need the $\lambda$-Lemma of Ma\~n\'e, Sad and Sullivan, \cite{ManeSadSullivan}.

\vspace*{5pt}
\begin{lemma}[$\lambda$-Lemma]~\label{mss}
Suppose $h(t,z): \Delta\times E\to \C$ is a holomorphic motion, where $E$ is a (not necessarily finite)  subset of $\C$.
Then it can be extended uniquely to a holomorphic motion $\overline{h} (t,z): \Delta\times \overline{E}\to \C$, where $\overline{E}$ means the closure of $E$ in $\C$.
\end{lemma}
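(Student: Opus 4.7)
My plan is to prove the $\lambda$-Lemma via the Schwarz-Pick lemma on the hyperbolic Riemann surface $\C\setminus\{0,1,\infty\}$ combined with a cross-ratio argument; the entire statement reduces to a uniform modulus of continuity for the family $z\mapsto h(t,z)$ on $E$. First I would reduce to the normalized case: if $\#E\le 2$ the extension is trivial, so I may assume $E$ contains three distinct points $a,b,c$. Composing $h(t,\cdot)$ with the M\"obius transformation sending $(h(t,a),h(t,b),h(t,c))$ to $(0,1,\infty)$ gives a post-normalized motion that is still holomorphic in $t$ and injective in $z$, and extending it to $\overline{E}$ is equivalent to extending the original.

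Next I would establish the key estimate: for every compact $K\subset\Delta$ there is a modulus $\omega_K$ with $\omega_K(s)\to 0$ as $s\to 0$ such that
\[
d_{\mathrm{sph}}\bigl(h(t,z),h(t,w)\bigr)\le \omega_K\bigl(d_{\mathrm{sph}}(z,w)\bigr)\qquad\text{for all }t\in K,\ z,w\in E.
\]
The argument uses cross-ratios. For $z,w\in E\setminus\{a,b,c\}$, the cross-ratio $\chi(t):=[h(t,z),h(t,w);0,1]$ is a holomorphic function $\Delta\to\C\setminus\{0,1,\infty\}$, since for every $t$ the five images of $a,b,c,z,w$ are distinct. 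Denoting the Poincar\'e distances by $d_*$ on the triply punctured sphere and $d_\Delta$ on the disk, Schwarz-Pick yields
\[
d_*\bigl(\chi(t),\chi(0)\bigr)\le d_\Delta(t,0)\le M_K\qquad\text{for }t\in K.
\]
Now $\chi(0)=[z,w;0,1]$ approaches a puncture of $\C\setminus\{0,1,\infty\}$ in the spherical metric as $d_{\mathrm{sph}}(z,w)\to 0$, and hyperbolic balls of radius $\le M_K$ in the triply punctured sphere shrink to points in the spherical metric as their centers approach any puncture. Hence $\chi(t)$ is spherically close to a puncture, which in turn forces $h(t,z)$ and $h(t,w)$ themselves to be spherically close.

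Given the key estimate, the rest is routine. For $z_0\in\overline{E}$ and any sequence $z_n\in E$ with $z_n\to z_0$, the estimate shows $\{t\mapsto h(t,z_n)\}$ is uniformly Cauchy on each compact $K\subset\Delta$, so it converges uniformly on compacta to a holomorphic function $\overline{h}(t,z_0)$ by Weierstrass, with $\overline{h}(0,z_0)=z_0$. Independence of the approximating sequence is immediate from the same estimate. For injectivity, if $z_0\ne w_0$ in $\overline{E}$ with approximating sequences $z_n\to z_0$ and $w_n\to w_0$ in $E$, then $t\mapsto h(t,z_n)-h(t,w_n)$ is nowhere zero on $\Delta$; by Hurwitz the limit $\overline{h}(t,z_0)-\overline{h}(t,w_0)$ is either identically zero or nowhere zero, and since it equals $z_0-w_0\ne 0$ at $t=0$ it is nowhere zero. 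Uniqueness of the extension is forced by continuity in the $z$-variable.

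The main obstacle is the key estimate, and specifically the quantitative fact that the Poincar\'e metric on $\C\setminus\{0,1,\infty\}$ degenerates like $|dz|/(|z|\log(1/|z|))$ near $0$ (and analogously near $1$ and $\infty$), so that hyperbolic balls of bounded radius collapse in the spherical metric as their centers approach a puncture. This degeneration must then be combined with the algebraic inversion from cross-ratios back to spherical distances between $h(t,z)$ and $h(t,w)$, with the three punctures handled separately; once this bookkeeping is done the rest of the proof assembles itself.
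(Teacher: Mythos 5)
The paper does not actually prove this lemma: it is quoted as the $\lambda$-Lemma of Ma\~n\'e, Sad and Sullivan and used as a black box, so there is no internal proof to compare against. Your argument is essentially the original MSS argument (and the standard textbook one): normalize three points of $E$ to $0,1,\infty$, use Schwarz--Pick into the triply punctured sphere together with the degeneration of its Poincar\'e metric at the punctures to obtain equicontinuity of $z\mapsto h(t,z)$ uniformly for $t$ in compacta, then extend to $\overline{E}$ by locally uniform limits, with Hurwitz giving injectivity and continuity in $z$ forcing uniqueness. That architecture is correct.

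One step is false as stated and needs repair. It is not true that $\chi(0)=[z,w;0,1]$ approaches a puncture of $\C\setminus\{0,1,\infty\}$ whenever $d_{\mathrm{sph}}(z,w)\to 0$: take $z=1/n$, $w=2/n$; then $[z,w;0,1]\to 1/2$, so the Schwarz--Pick bound on $\chi$ gives no information in this regime. The cross-ratio argument only covers the case where $z$ and $w$ approach each other while remaining a definite distance (say at least $d_{\mathrm{sph}}(z,w)^{1/4}$) from the three normalization points; there indeed $\chi(0)\to 1$ and your inversion, via $\chi(t)-1=\bigl(Q-P\bigr)/\bigl((P-1)Q\bigr)$ with $P=h(t,z)$, $Q=h(t,w)$, returns a spherical bound because $|P-1|/\sqrt{1+|P|^2}$ and $|Q|/\sqrt{1+|Q|^2}$ are bounded. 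The complementary case, in which $z$ and $w$ both lie close to one of $0,1,\infty$, must be handled by the single-point estimate: $t\mapsto h(t,z)$ is itself a holomorphic map of $\Delta$ into $\C\setminus\{0,1,\infty\}$ fixing nothing but constrained by Schwarz--Pick, so by the same collapsing-balls reasoning $h(t,z)$ and $h(t,w)$ are each spherically close to the corresponding puncture, hence to each other. With this two-case split (thresholds chosen so the regimes overlap), your key estimate --- and with it joint continuity of $\overline{h}$ on $\Delta\times\overline{E}$, which the definition of a motion requires and which you should state explicitly --- goes through, and the remainder of your proof is sound.
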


Now suppose $h(t,z): \Delta\times E\to \C$ is the normalized holomorphic motion in Theorem~\ref{slodkowski}. Let $E_{\infty}=\{ 0, 1, \infty, p_{1}, \cdots, p_{n},\cdots\}$ be a countable dense subset of $E$.
Let $F=\{ q_{1}, \cdots, q_{n}, \cdots\}$ be a countable dense subset of $\C\setminus E$. Let $E_{n}=\{ 0, 1, \infty, p_{1}, \cdots, p_{n}\}$ and $F_{n} =\{q_{1}, \cdots, q_{n}\}$.
Then
$h_{n}= h|\Delta\times E_{n}$ is a holomorphic motion for every $n>3$. Our main result (Theorem~\ref{main}) with the consideration of Lemma~\ref{qcmotions} implies that we can extend $h_{n}$ to a
holomorphic motion $H_{n}(t,z): \Delta\times (E_{n}\cup F_{n})\to \C$. Inductively, we have a holomorphic motion $H_{\infty} (t,z): \Delta\times (E_{\infty}\cup F)\to \C$ which extends every $h_{n}$.
The $\lambda$-Lemma implies we can extend this last holomorphic motion into a holomorphic motion $H(t,z)$ of the closure of $E_{\infty}\cup F$ which is the whole Riemann sphere $\C$ with parameter space $\Delta$.
This holomorphic motion $H$ is an extension of $h$. This completes our new proof of Theorem~\ref{slodkowski}.

\end{section}

\begin{section}{Arbitrary dynamical spaces}\label{sec8}

In this section we show how the proof given in the previous sections for the case when the dynamical space is the Riemann sphere can be generalized to prove the analogous theorem when the dynamical space is any Riemann surface.  This is the content of Theorem~\ref{main}.

 We assume that the finite subset $E$ of $Y$ is  moving holomorphically and parameterized by a  guiding quasiconformal isotopy with parameter $t  \in X.$  Our goal is to use the isotopy to induce a  holomorphically moving point $p(t)$ that starts at  any point $p=p(t_{0})$ in $Y-E$ and has the  property that $p(t)$ never lies in the set $E_t.$

 First, take a point $p \in Y$ and form a Fuchsian  universal covering  group $\Gamma$ of $Y-\{p\}$  acting on the
 upper half plane ${\mathbb H}$ that contains a  primitive parabolic element $\gamma_0(z)= z+1$ with  the property that the arc $\tilde{\alpha}(t) =t  +iM, 0 \leq t \leq 1$ has  image $\alpha$ under the  covering that winds once around the point $p.$
By the Leutbecher-Shimizu \cite{Leutbecher,Shimizu} lemma the restriction of the covering mapping to the semi-infinite strip $$\{w: 0 \leq u= {\rm Re \ } w <1, v = {\rm Im \ }w>1\}$$ is a homeomorphism onto an embedded punctured disc contained in $Y-\{p\}$ with the puncture located at $p.$
Thus, $z=\exp(2 \pi i w)$ is a local parameter on $Y$ with $z=0$ corresponding to the point $p$
and
the part of this strip lying above $v=M>1$ corresponding to an embedded disc
\begin{equation}\label{smalldisc}\{z:|z| \leq \epsilon =\exp (-2 \pi M)\}.\end{equation}

Now we can replace Step I of section \ref{sec6} by the following device.  We construct a Kleinian group $\Gamma_{\epsilon}$  that contains two isomorphic copies $\Gamma$ both of which  act discontinuously on a certain domain.  The domain has an anticonformal involution $j$ and the action of one copy of $\Gamma$
covers $Y_{\epsilon}$ and the action of $j \circ \Gamma \circ j$ on the other copy covers $j(Y_{\epsilon}).$  It turns out that $\Gamma_{\epsilon}$ is called an HNN-extension of $\Gamma,$
named after Higman, Neumann and Neumann.

  To construct $\Gamma_{\epsilon},$ we take $M>1$
  and let $j$ be reflection in the plane around the   horizontal line $y=M.$
  Then we form  Kleinian group $\Gamma^d$ generated   by $\Gamma$ and $j \circ \Gamma \circ j.$
  Note that both of these groups contain the common   element $z \mapsto z+1.$
   If we take a fundamental domain for $\Gamma$ that    is bounded by the two vertical lines $x=0$ and
  $x=1$ and axes of hyperbolic transformations all   lying below the line $y=1,$ then $\Gamma^d$ has a   fundamental domain lying between the two   horizontal lines $y=0$ and $y=2M.$ This rectangle   contains the subrectangle $0 \leq x \leq 1$ and $1  \leq y \leq 2M-1,$  which is entirely contained   within the fundamental domain for $\Gamma^d.$    Moreover,  the fundamental domain is invariant   under its reflection $j$ across the line $y=M.$     $\Gamma^d$ is the amalgamated product of $\Gamma$   and $j \circ \Gamma \circ j,$ amalgamated along   the common cyclic subgroup generated by $z \mapsto   z+1.$
  Since $\Gamma^d$ depends on $M$ and $M$ depends on   $\epsilon$ by the simple formula   (\ref{smalldisc}), we denote $\Gamma^d$ by   $\Gamma_{\epsilon}.$
  The groups $\Gamma$ and $\Gamma_{\epsilon}$ both   act on the Riemann sphere and, if we hold the   limit points $0,1,$ and $\infty$ fixed
  and let $\epsilon \rightarrow 0,$ the fundamental   domains for the groups $\Gamma_{\epsilon}$
  converge in the Hausdorff sense to the fundamental   domain for $\Gamma.$

\vspace{.2in}

\vspace*{10pt}

\setlength{\unitlength}{.5in}
\begin{picture}(10,5)(-4.6,0)
\linethickness{0.5pt}
\put(-5,0){\vector(1,0){10}}
\put(0,0){\vector(0,1){5}}
\put(-5,0.5){\line(1,0){10}}
\put(-4,0.6){\makebox(0,0){$y=1$}}
\put(-5,2){\line(1,0){10}}
\put(-4,2.1){\makebox(0,0){$y=M$}}
\put(0.2,2.2){\makebox(0,0){$\tilde{\alpha} (t)$}}
\put(1,2.2){\makebox(0,0){$\uparrow$ $i$}}
\put(3,3){\makebox(0,0){$\Gamma_{\epsilon}\rightarrow \Gamma$ as $\epsilon \to 0$ or $M\to \infty$}}
\put(2,4.5){\makebox(0,0){$z\rightarrow z+1$}}
\put(-5,4){\line(1,0){10}}
\put(-4,4.1){\makebox(0,0){$y=2M$}}
\put(-5,3.5){\line(1,0){10}}
\put(-4,3.6){\makebox(0,0){$y=2M-1$}}
\put(0.5,0){\line(0,1){5}}
\put(0.9,0.8){\makebox(0,0){$x=1$}}
\put(-0.5,0.8){\makebox(0,0){$x=0$}}
\linethickness{1pt}
\put(0,2){\line(1,0){0.5}}
\end{picture}

\vspace{10pt}

\centerline{Figure 1}

\vspace{.3in}
Steps II, III and IV can be repeated almost verbatim but now with Beltrami coefficients $\tilde{\mu}_{t,\epsilon}$ and $\mu_{t,\epsilon}$ and corresponding mappings $g^t$ and $g^{t,\epsilon}$   all equivariant for the Kleinian groups $\Gamma_{\epsilon}.$  As one lets $M \rightarrow \infty$ or $\epsilon\rightarrow 0$ and holds three limit points $0,1$ and $\infty$ fixed, the groups $\Gamma_{\epsilon}$ converge to $\Gamma,$  and so do their corresponding conjugated groups $g^{t,\epsilon} \circ \Gamma_{\epsilon} \circ (g^{t,\epsilon})^{-1}$ converge to $g^{t} \circ \Gamma \circ (g^{t})^{-1}$.  The extension of the holomorphic motion to $p$ is realized by the  corresponding fixed point of a parabolic transformaton in  $g^{t} \circ \Gamma \circ (g^{t})^{-1}.$ Since the entries of this parabolic transformation vary holomorphically with $t,$ so does the corresponding fixed point $\hat{p}(t).$ This completes the proof of Theorem~\ref{main}.

\end{section}

\begin{section}{Trace monodromy and the isotopy  principle}\label{sec9}

In this section we show why the guiding isotopy  assumption in Theorem~\ref{main} is necessary.
To do this we describe a topological obstruction to the extension of continuous motions parameterized by a surface with non-trivial fundamental group. This description depends on monodromy and something we call trace monodromy,  concepts which are developed in
~\cite{BeckJiangMitraShiga}.
Using these ideas we give an example of a holomorphic motion
of a finite subset in  Riemann sphere $X$ parameterized by any non-simply connected bounded domain $Y$ in the complex plane that cannot be extended to
a continuous  motion of the whole Riemann sphere parameterized by the same domain.

Suppose $X$ is any hyperbolic Riemann surface and $Y=\C-E$ where $E=\{0,1,\infty, p_4,\ldots ,p_n\}.$
If $h$ is a continuous motion of $ X \times E\to \C$, then for any $z$ in $E-\{0,1,\infty\}$,
$h(t, z)=h^{z}(t): X\to \mathbb{C}_{0,1}=\overline{\mathbb{C}}\setminus \{0,1,\infty\}$ is continuous and, for any choice of $z \in E-\{0,1,\infty\}$ one obtains a different homomorphism of fundamental groups:
$$
h_{*}^{z}: \pi_{1} (X)\to \pi_{1} (\mathbb{C}_{0,1}).
$$We call these homomorphisms the {\it trace monodromies} induced by $h$.
Taking into account the arbitrary normalization at three points  in $E$ and the arbitrary choice of a fourth, one finds that  the number of  different trace monodromy conditions is given by the binomial coefficient
$$\left(\begin{array}{c}
n\\
4
\end{array}\right).
$$

By definition a  trace monodromy is
trivial if it maps every element of $\pi_{1}(X)$ to the
identity of $\pi_{1}(\mathbb{C}_{0,1})$. The trace monodromy obstruction to topological extension described in the following theorem and the more general monodromy obstruction are presented in~\cite{BeckJiangMitraShiga}.

\vspace*{5pt}
\begin{theorem}~\label{BeckJiangMitraShiga}
Suppose $X$ is a Riemann surface with a base point $t_{0}$.
Let $h: X\times E\to \overline{\mathbb{C}}$ be a normalized holomorphic motion of a
    finite set $E$ with $card(E)\geq 4.$  If $h$ has a quasiconformal guiding isotopy,    then for  each   $z \in E$
 the trace monodromy $h^{z}_{*}:     \pi_{1}(X,t_0)\to \pi_{1}(\mathbb{C}_{0,1})$ is trivial.
\end{theorem}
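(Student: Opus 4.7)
The plan is to use the straight-line deformation of Beltrami coefficients combined with continuous dependence in the measurable Riemann mapping theorem to produce, for each loop $\gamma$ in $X$ based at $t_0$ and each $z\in E$, an explicit null-homotopy of $h^z\circ\gamma$ in $\mathbb{C}_{0,1}$. The cases $z\in\{0,1,\infty\}$ are trivial since $h^z$ is then constant, so I focus on $z\in E\setminus\{0,1,\infty\}$.

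The setup goes as follows. Because $H$ extends the normalized motion $h$, each $H(t,\cdot)$ is a quasiconformal self-homeomorphism of $\overline{\mathbb{C}}$ fixing $0,1,\infty$; by uniqueness in the measurable Riemann mapping theorem, $H(t,\cdot)=w^{\mu_t}$ in the notation of Section~\ref{sec7}, where $\mu_t$ is the guiding Beltrami coefficient. Since $H(t_0,\cdot)$ is the identity, $\mu_{t_0}=0$. Given a loop $\gamma:[0,1]\to X$ based at $t_0$, set $\nu(t)=\mu_{\gamma(t)}$; this is a continuous loop in the open unit ball $M(\mathbb{C})$ of $L^\infty(\mathbb{C})$ with $\nu(0)=\nu(1)=0$.

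I would then define the candidate null-homotopy $F:[0,1]^2\to\overline{\mathbb{C}}$ by $F(s,t)=w^{s\nu(t)}(z)$ and verify four properties: (i) $F(0,t)=z$ and $F(s,0)=F(s,1)=z$, so $F$ is a homotopy rel endpoints of loops based at $z$; (ii) $F(1,t)=h^z(\gamma(t))$, so the $s=1$ slice recovers the loop to be contracted; (iii) $F(s,t)\in\mathbb{C}_{0,1}$, because $w^{s\nu(t)}$ is a homeomorphism fixing $\{0,1,\infty\}$ and $z$ is not in this set; (iv) $F$ is continuous. Items (i)--(iii) are immediate from normalization and the choice of $\nu$. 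Granted $F$, one reads off $h^z_*[\gamma]=0$ in $\pi_1(\mathbb{C}_{0,1})$, and arbitrariness of $\gamma$ and $z$ finishes the proof.

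The only point of genuine substance is continuity in (iv), which amounts to continuity of $(s,t)\mapsto s\nu(t)$ into $L^\infty(\mathbb{C})$, clear from continuity of $\nu$ and of scalar multiplication, together with continuity of the evaluation map $\nu\mapsto w^\nu(z)$ on $M(\mathbb{C})$. The latter is a standard, in fact holomorphic, dependence statement from the measurable Riemann mapping theorem, already invoked in Section~\ref{sec7}. Conceptually, the argument works because $M(\mathbb{C})$ is convex and hence contractible, so any loop of normalized Beltrami coefficients lifts to a null-homotopy of the associated loop of evaluations; there is no real obstacle beyond bookkeeping.
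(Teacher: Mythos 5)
Your proposal is correct and follows essentially the same route as the paper's proof: identify $H(\gamma(t),\cdot)$ with the normalized solution $w^{\nu(t)}$ via uniqueness in the measurable Riemann mapping theorem, and contract the loop using the straight-line homotopy $(s,t)\mapsto w^{s\nu(t)}(z)$, relying on continuous dependence of $w^{\mu}$ on $\mu$. The only differences are cosmetic (your roles of $s$ and $t$ are swapped relative to the paper, and you spell out the rel-basepoint verification that the paper leaves implicit).
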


\begin{proof}
We outline the proof here.
Suppose $H(t,z): X\times \C\to \C$ is a continuous motion which extends $h(t,z): X\times E\to \C$. Let $\gamma (s)$, $0\leq s\leq 1$,
be a simple closed curve in $X$ with $\gamma (0)=\gamma (1)=t_{0}$. Let $\mu (s)$ be the Beltrami coefficient of $H (\gamma (s),\cdot)$ which is continuous on $s$.
Then $H(\gamma (s), \cdot) =w^{\mu (s)} (\cdot)$ since both are quasiconformal maps fixing $0,1, \infty$ with the same Beltrami coefficient.
Let $\hat{H}(s, t) = w^{t\mu (s)} (z): [0,1]\times [0,1]\to \cp_{0,1}$ for any $z\not=0,1, \infty\in E$.
Then it is a continuous map such that $\hat{H}(s,1) =H(\gamma (s), z)$ and $\hat{H}(s, 0)=z$.
Thus $\hat{H}(\gamma (s), z)$ is a continuous curve in $\cp_{0,1}$ homotopic to a point $z$ in $\cp_{0,1}$. This implies that the trace monodromy is trivial.
\end{proof}

Using this theorem, we can construct a counterexample of a holomorphic motion of a finite subset of
the Riemann sphere parameterized by any non-simply connected planar domain which does not satisfy our isotopy class assumption in Theorem~\ref{main}.

\vspace*{5pt}
\begin{example}~\label{ex}
Suppose $X$ is a planar domain in the Riemann sphere $\C$ such that $\C - X$ has more than one connected component and at least one component contains more than one point.
Let $t_{0}$ be the base point of $X.$  Then for any finite subset $E$ in $\C$ with $\#(E)\geq 4$,
there is a holomorphic motion $h(t,z): X\times E\to \C$ that
has no quasiconformal guiding isotopy.
\end{example}

\begin{proof}
Since $\C-X$ has a component containing more than one point, we can use two points in this component and a square root map to map $X$ into a half-plane.
Then applying a M\"obius transformation, we can assume that $X$ is a bounded planar domain such that $\C-X$ has one unbounded component and several bounded components.

Suppose $z_{0} \in E-\{0, 1, \infty\}.$ Since $X$ is planar, we can map it conformally  to a planar domain $\widetilde{X}$ containing only one point $z_{0}$ in $E$. Thus we have a domain
$\widetilde{X}$ such that $z_{0}\in \widetilde{X}$ and
$\widetilde{X}\cap (E- \{z_{0}\})=\emptyset$ and $0$ is in a bounded component of $\C - \widetilde{X}$ and $E-\{0, z_{0}\}$ are all in the unbounded component of $\C - \widetilde{X}$ and a conformal map
$z=\phi(t): X\to\widetilde{X}$
such that $\phi (t_{0}) =z_{0}$.
Define $h(t, z)=z$ for any $z\not=z_{0}$ and $t\in X$ and $\phi (t,z_{0})=\phi (t)$. Then $h$  is a holomorphic motion with non-trivial trace monodromy.
From Theorem~\ref{BeckJiangMitraShiga}, it does not have a quasiconformal guiding isotopy.
\end{proof}

\vspace{.2in}

\begin{remark}
When the cardinality of  $E$ is $4$ and $X={\mathbb C}-\{0,1\}$ is the thrice-punctured sphere with a base point $t_{0}$, Douady constructed the following counterexample.
Let $E=\{ 0, 1, \infty, t_{0}\}$, let $h(t,z): X\times E\to \C$ and define $h$ by $h(t,0)=0$, $h(t,1)=1$, and $h(t, \infty)=\infty$, and $h(t,t_{0})=t$.
Then Douady showed that $h$ is a maximal holomorphic motion and, therefore, cannot be extended further.
Since an annulus $A$ can be thought as a covering space of the thrice-punctured sphere, there is a covering map $\pi: A\to X$.
Earle considered $\widetilde{h} (t,z)=(\pi^{*}h) (t,z)=h(\pi(t), z): A\times E\to \C$ and showed it is a maximal holomorphic motion and so it also cannot be extended further. See ~\cite{Earle2} for these two counterexamples and the definition of a maximal holomorphic motion.
The topological obstruction defined in~\cite{BeckJiangMitraShiga} gives us more flexibility to construct more counterexamples.
One can find other counterexamples when the parameter space is the punctured disk in~\cite{BeckJiangMitraShiga} or an annulus.
The counterexample given in Example~\ref{ex} works for any finite subset with any non-simply connected planar domain.
\end{remark}

\vspace{.1in}

\begin{remark}
In~\cite{BeckJiangMitraShiga} it is shown that when $card(E)=4$ a holomorphic motion $h: X\times E\to \C$ can be extended to a holomorphic motion
$\widetilde{h}: X\times \C\to \C$ if, and only if, its trace monodromy is trivial.
\end{remark}
\end{section}

\bibliographystyle{plain}
\bibliography{Articles,books}

\vspace{.2in}

\noindent {Frederick P. Gardiner,} Department of Mathematics, Brooklyn College, Brooklyn, NY 11210 and Department of Mathematics, CUNY Graduate Center,  89365 Fifth Avenue, New York, NY 10016. Email:frederick.gardiner@gmail.com

\vspace{.1in}

\noindent {Yunping Jiang,} Department of Mathematices, Queens College CUNY, Flushing,  NY 11367 and Department of Mathematics, CUNY Graduate Center,  365 Fifth Avenue, New York, NY 10016. Email: yunping.jiang@qc.cuny.edu

\end{document}